\newtheoremstyle{mythm}                   
{6pt}
{6pt}
{\it}
{}
{\bf}
{.}
{.5em}
{}
\newtheoremstyle{mydef}                   
{6pt}
{6pt}
{}
{}
{\bf}
{.}
{.5em}
{}
\newtheoremstyle{myrem}                   
{6pt}
{6pt}
{}
{}
{\bf}
{.}
{.5em}
{}
\theoremstyle{mythm}
\newtheorem{theorem}{Theorem}[section]
\newtheorem{prop}[theorem]{Proposition}
\newtheorem{lemma}[theorem]{Lemma}
\theoremstyle{mydef}
\theoremstyle{myrem}
\numberwithin{equation}{section}
\subjclass[2000]{}
\newcommand{\R}{\mathbb{R}}
\newcommand{\C}{\mathbb{C}}
\newcommand{\Aut}{\mathrm{\mathop{Aut}}}
\newcommand{\re}{\mathrm{re}}
\newcommand{\imm}{\mathrm{im}}
\newcommand{\ad}{\mathrm{\mathop{ad}}}
\newcommand{\Ad}{\mathrm{\mathop{Ad}}}
\newcommand{\GL}{\mathrm{\mathop{GL}}}
\newcommand{\SL}{\mathrm{\mathop{SL}}}
\newcommand{\SU}{\mathrm{\mathop{SU}}}
\newcommand{\g}{\mathfrak{g}}
\newcommand{\fk}{\mathfrak{k}}
\newcommand{\fp}{\mathfrak{p}}
\renewcommand{\leq}{\leqslant}
\renewcommand{\geq}{\geqslant}
\newcommand{\gl}{\mathfrak{gl}}
\renewcommand{\sl}{\mathfrak{sl}}
\newcommand{\h}{\mathfrak{h}}
\newcommand{\rre}{\rho^{\mathrm{re}}}
\newcommand{\rim}{\rho^{\mathrm{im}}}
\newcounter{ithmcount}
\newenvironment{iprf}{\begin{list}{{\rm
	\alph{ithmcount})}}{\usecounter{ithmcount}\labelwidth-5pt
      \leftmargin0pt \topsep3pt \itemsep1pt \parsep2pt}}{\qedhere\end{list}}
\newenvironment{ithm}{\begin{list}{{\rm \alph{ithmcount})}}{\usecounter{ithmcount}\labelwidth18pt
      \leftmargin18pt \topsep3pt \itemsep1pt \parsep2pt}}{\end{list}}
\begin{document}

\vspace*{-2cm}

\title[]{Computing the real Weyl group}
\author[H. Dietrich]{Heiko Dietrich}
\address{School of Mathematics, Monash University,
  Clayton VIC 3800, Australia}
\email{heiko.dietrich@monash.edu}
\author[W.\ A.\ de Graaf]{Willem A.\ de Graaf} 
\address{Department of Mathematics, University of Trento, Povo (Trento), Italy}
\email{degraaf@science.unitn.it}

\keywords{real Weyl group, real semisimple Lie algebra, computational Lie theory}
\date{\today}
\thanks{This work is  supported by an Australian Research Council grant, identifier DP190100317.}


\begin{abstract}
Let $\g$ be a semisimple Lie algebra over the real numbers. We describe an explicit combinatorial construction of the real Weyl group of $\g$ with respect to a given Cartan subalgebra. An efficient computation of this  Weyl group is important for the classification of regular semisimple subalgebras, real carrier algebras, and real nilpotent orbits associated with $\g$; the latter have various applications in theoretical physics.
\end{abstract}

\maketitle%

\section{Introduction}
\noindent Let $\g^c$ be a semisimple Lie algebra over the complex numbers. The adjoint group $G^c$ of $\g^c$ is the identity component (in the Zariski topology) of the automorphism group of $\g^c$. Up to conjugacy in $G^c$, there is a unique Cartan subalgebra $\h^c\leq \g^c$; let $\Phi$ be the corresponding root system. The reflections defined by all those roots generate the Weyl group $W(\Phi)$ of $\g^c$; the latter can also be defined as  $W(\Phi)=N_{G^c}(\h^c)/Z_{G^c}(\h^c)$. Root systems and Weyl groups are arguably the most important tools in Lie theory, because many problems can be reduced to computations with those combinatorial objects.  The situation is similar, but more complicated when considering Lie algebras over the real numbers; one common issue is that now Cartan subalgebras and Weyl groups are not necessarily unique. We refer to the books of Humphreys \cite{humph}, Knapp \cite{knapp}, or  Onishchik \cite{onish} for extensive background information.

Now consider a real form $\g$ of $\g^c$, with associated complex conjugation~$\sigma$, and choose a Cartan involution $\theta$ of $\g$. Let $G$ be the subgroup of $G^c$ consisting of all $g\in G^c$ with $g(\g)=\g$, and write  $G^\circ$ for its identity component (in the Euclidean
topology). Following \cite[(7.92a)]{knapp}, we define the real Weyl group of $\g$ with respect to a $\theta$-stable Cartan subalgebra $\h$ of $\g$ as \[W(\g,\h)=N_{G^\circ}(\h)/Z_{G^\circ}(\h).\]
In our paper \cite{dfg2}, an algorithm is given to compute regular semisimple subalgebras of $\g$ up to
conjugacy by $G^\circ$. In this algorithm the real Weyl group plays a paramount role. Similarly, the real Weyl
group can be used to classify carrier subalgebras and nilpotent orbits associated with a grading of $\g$,
see \cite{dfg2} for details. Related to that, in \cite{dgrt} an application is discussed of nilpotent orbit
classifications to theoretical physics (supergravity). It is therefore of interest to understand the structure of $W(\g,\h)$ and to have computational tools that can be used to construct it.

In a more general context, the ATLAS project \cite{atlas} considered $W(G',H)=N_{G'}(H)/Z_{G'}(H)$ for an arbitrary real form $G'$ of $G^c$ with $\theta$-stable Cartan subgroup $H\leq G'$, see  \cite{vogan,adams,dc2}. It is shown in \cite[Propositions 4.11 \& 4.16]{vogan} that 
\[W(G',H)\cong (W^{\mathrm{c}})^\theta\ltimes (W^{\re}\times (W(G',H)\cap W^{\imm})\]
with $W(G',H)\cap W^{\imm}=A\ltimes W^{\imm,\mathrm{c}};$ here $W^\re$, $W^\imm$, $W^{\imm,\mathrm{c}}$ are the Weyl groups of the root systems of $\g^c$ consisting of real, imaginary, and compact imaginary roots, respectively; moreover, $W^{\imm}=Q\ltimes W^{\imm,\mathrm{c}}$ for some elementary abelian 2-subgroup $Q$ containing $A$. More details and the definition of $(W^{\mathrm{c}})^\theta$ are given in Section \ref{secRSS}. While each of $(W^{\mathrm{c}})^\theta$, $W^{\imm,\mathrm{c}}$, $W^{\imm}$, and $Q$ can be computed from the Lie algebra data alone, the construction of $A$ is the complicated part and depends on the isogeny type of the real Lie group $G'$. It is \cite[Corollary 6.10]{dc2} that gives a clue for this construction. 

The aim of this paper is to describe  how to construct $W(\g,\h)$ by computer. 
Because we focus on the group $G^\circ$, which is completely determined by $\g$, it is in principle possible
to determine $W(\g,\h)$ using only information from $\g$. Here we provide efficient algorithms to do this.
For this we give a self-contained and detailed proof of the decomposition \begin{eqnarray}\label{eqWDEC}W(\g,\h)=(W^{\mathrm{c}})^\theta \ltimes (W^{\mathrm{re}}\times (A\ltimes W^{\imm,\mathrm{c}}))
\end{eqnarray}
and explain how all the subgroups involved can be computed.
While guided by the proofs in \cite{vogan,adams,dc2}, our description attempts to be largely self-contained and to avoid, as much as possible, the technical details in those papers. The latter is achieved by specialising results to $G^\circ$, and by rewriting some proofs of \cite{vogan,adams,dc2} assuming not much more than basic properties of root systems.
Our implementation of our algorithm is contained in the software package CoReLG for the system GAP \cite{gap}. 

The structure of the paper is as follows. In Section \ref{secNot} we introduce the notation used in this work. In Section \ref{secRSS} we discuss various root sub-systems and their Weyl groups. Our set-up allows us to describe in detail the proof of the construction of the subgroup $A$, see Proposition \ref{propWI}. In turn, this allows us to prove the main result, a combinatorial construction of  $W(\g,\h)$, in Theorem \ref{thmFINAL}. We conclude with some examples in Section \ref{secComp}.

We note that our paper \cite{dfg2} also comments on the construction of  $N_{G'}(\h)/Z_{G'}(\h)$, but some confusing assumptions have been posed on $G'$, namely that $G'=G^c(\R)$ is a group of real points and also connected; see also the clarification in \cite[Remark 10]{dgrt}.  Here we consider the group $G'=(G^c(\R))^\circ$, which is called the adjoint group of $\g$, see \cite[Section II.5]{helga}.


\section{Notation}\label{secNot}
\noindent We use basic knowledge on root systems, such as bases of simple roots, positive roots, Weyl groups; for background information on Lie algebras and root systems we refer to standard books, such as Humphreys \cite{humph}, Knapp \cite{knapp}, or  Onishchik \cite{onish}. Throughout, we use the following notation. Let $\g^c$ be a semisimple complex Lie algebra with real form $\g$ and associated conjugation $\sigma$, that is, $\sigma( x+\imath y ) = x-\imath y$ for $x,y\in \g$. Let  $\theta$
be a Cartan involution of $\g$ with Cartan decomposition $\g=\fk\oplus\fp$, so that  $\tau=\sigma\circ\theta$ is a compact
structure on~$\g^c$. We refer to \cite[Section 2]{dfg} for details on the construction of real forms. Let $G^c$ be the adjoint group of $\g^c$, which can be defined as the identity component
(in the Zariski topology) of the automorphism group of $\g^c$, see \cite[(I.7)]{onish}. Let $G$ be
the group consisting of all $g\in G$ with $g(\g)=\g$, that is, $g\in G^c$ lies in $G$ if and only if $g\circ \sigma = \sigma\circ g$.
Let $G^\circ$ denote the identity component of $G$ in the Euclidean  
topology. Note that $G^c$ is an algebraic group and its Lie algebra  is spanned by $\ad_\g x$ where  $x$ runs over the elements of a basis of $\g$, see \cite[(I.1)]{onish}; since every such basis is defined over $\R$, the group $G^c$ is defined over $\R$. In particular, we can view $G^c$  as a
matrix group by taking matrices with respect to a fixed basis of $\g$; in this situation,  it follows that $G=G^c(\mathbb{R})$ is the group of real points of~$G^c$. We fix a $\theta$-stable Cartan subalgebra $\h$ of $\g$ and let $\h^c$ be its
complexification; note that $\h^c$ is a Cartan subalgebra of $\g^c$. We define \[W(\g,\h)=N_{G^\circ}(\h)/Z_{G^\circ}(\h)\] as the real Weyl group of $\g$ with respect to $\h$. Let $\Phi$ be
the root system of $\g^c$ with respect to $\h^c$. The abstract Weyl group defined by $\Phi$ is denoted by  $W=W(\Phi)$ and generated by all reflections $s_\alpha$ where $\alpha$ runs over a set of simple roots of $\Phi$, see \cite[Section 9.3]{humph}. It is well known (see for example \cite[(I.7) \& (II.16)]{onish}) that one can also define $W$ analytically as
\[W=N_{G^c}(\h^c)/Z_{G^c}(\h^c).\]
Fix a Chevalley basis of $\g^c$, consisting of semisimple elements $h_1,\ldots,h_\ell\in \h^c$ and root vectors $x_\alpha$ for $\alpha\in \Phi$, see \cite[Section~25.2]{humph}. For $h\in \h^c$ and $\alpha\in \Phi$ we have $[h,x_\alpha]=\alpha(h)x_\alpha$, hence
$$[h,\theta(x_\alpha)] = \theta ( [\theta(h),x_\alpha] ) =
 \alpha(\theta(h)) \theta(x_\alpha),$$
and so $\theta(x_\alpha)$ lies in the root space corresponding to $\alpha\circ
\theta$. In particular, $\alpha\circ\theta\in\Phi$, so we have an involution
$\alpha \mapsto \alpha\circ\theta$ of $\Phi$;  we write
$\theta(\alpha)$ for $\alpha\circ\theta$, and we extend this involution
to the dual space $(\h^c)^*$. Let $\kappa$ denote the Killing form of $\g^c$; since $\g^c$ is semisimple, $\kappa$ is a symmetric, non-degenerate bilinear form. Its restriction to
$\h^c$ gives the well-known bijection $(\h^c)^* \to \h^c$, $\mu\mapsto
h_\mu'$, where $h_\mu'$ is defined by $\mu(-) = \kappa( -, h_\mu')$. For
$\mu,\lambda
\in (\h^c)^*$ define $(\mu,\lambda) = \kappa( h_\mu', h_\lambda')$,
and for $\alpha,\beta\in \Phi$ write $\langle \alpha,\beta^\vee\rangle=2(\alpha,\beta)/(\beta,\beta)$. We conclude with an observation.

\begin{lemma}\label{lemForm} The restriction of $(-,-)$ to the real span of $\Phi$ is a $\theta$-invariant inner product.
\end{lemma}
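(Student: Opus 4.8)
The statement has two parts: first that the restriction of $(-,-)$ to $V := \mathrm{span}_\R(\Phi)$ is an inner product (i.e.\ symmetric, positive definite), and second that it is $\theta$-invariant. The first part is classical: I would recall that the Killing form $\kappa$ is positive definite on the real span of the coroots $h'_\alpha$, equivalently that $(-,-)$ is positive definite on $V$, because $\Phi$ spans a Euclidean space in the standard theory of root systems (see e.g.\ \cite[Section 8.5]{humph}); symmetry is immediate from symmetry of $\kappa$. So the only thing requiring a short argument is $\theta$-invariance: I must show $(\theta\mu, \theta\lambda) = (\mu,\lambda)$ for all $\mu,\lambda \in V$, where $\theta$ acts on $(\h^c)^*$ as described before the lemma.

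The key step is to compare $(-,-)$ with its $\theta$-twist via the defining property of $h'_\mu$. Since $\theta$ is an automorphism of $\g^c$ and $\kappa$ is the Killing form, $\kappa$ is $\theta$-invariant: $\kappa(\theta x, \theta y) = \kappa(x,y)$ for all $x,y \in \g^c$. Applying this with $x = h'_\mu$, $y = h'_\lambda$ gives $\kappa(\theta h'_\mu, \theta h'_\lambda) = \kappa(h'_\mu, h'_\lambda) = (\mu,\lambda)$. It therefore suffices to identify $\theta h'_\mu$ with $h'_{\theta\mu}$. For this I would check the defining equation: for any $h \in \h^c$,
\[
\kappa(h, \theta h'_\mu) = \kappa(\theta h, h'_\mu) = \mu(\theta h) = (\theta\mu)(h),
\]
using $\theta$-invariance of $\kappa$, the fact that $\theta$ preserves $\h^c$ (so $\theta h \in \h^c$ and the defining relation $\mu(-) = \kappa(-, h'_\mu)$ applies), and the definition $\theta\mu = \mu\circ\theta$. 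Hence $\theta h'_\mu = h'_{\theta\mu}$, and combining with the previous display yields $(\theta\mu,\theta\lambda) = (\mu,\lambda)$. Finally one notes $\theta$ does preserve $V$: since $\theta$ permutes $\Phi$ (as observed in the paragraph before the lemma), it maps $\mathrm{span}_\R(\Phi)$ to itself, so the restriction makes sense.

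The main (and essentially only) obstacle is bookkeeping about where things live: one must be careful that $\theta$ restricted to $\h^c$ is the transpose (with respect to $\kappa|_{\h^c}$) of its action on $(\h^c)^*$, which is exactly the computation above, and that positive-definiteness on the real span is being quoted correctly — $(-,-)$ is positive definite on $\mathrm{span}_\R(\Phi)$, not on all of $\h^c$ (where $\kappa$ has mixed signature once we pass to a real form). No serious difficulty arises; the lemma is a routine consequence of $\theta$ being a Killing-form isometry preserving $\h^c$.
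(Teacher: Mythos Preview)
Your proof is correct and follows essentially the same route as the paper's: cite \cite[Section 8.5]{humph} for the inner-product part, use that $\theta$ is a Killing-form isometry to deduce $\theta(h'_\mu)=h'_{\theta(\mu)}$ via the computation $\kappa(h,\theta h'_\mu)=\kappa(\theta h,h'_\mu)=\mu(\theta h)$, and conclude $(\theta\mu,\theta\lambda)=(\mu,\lambda)$. Your explicit remark that $\theta$ preserves $\mathrm{span}_\R(\Phi)$ is a small addition the paper leaves implicit.
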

\begin{proof}
It is well-known that this restriction is an inner product, see \cite[Section 8.5]{humph}. Note that $\ad_{\g^c}\gamma(x)=\gamma\circ\ad_{\g^c}(x)\circ \gamma^{-1}$ for every $x\in\g^c$ and automorphism $\gamma\in\Aut(\g^c)$, which shows  that $\kappa(\theta(x),\theta(y)) = \kappa(x,y)$ for all $x,y\in \g^c$. This implies that $\kappa(h,\theta(h_\mu')) = \kappa(\theta(h),
h_\mu')= \mu(\theta(h))$,  hence $h_{\theta(\mu)'} = \theta(h_\mu')$. This shows that $(\theta(\mu),\theta(\lambda)) = (\mu,\lambda)$ for all $\mu,\lambda\in
(\h^c)^*$, as claimed.
\end{proof}

\section{Some root subsystems}\label{secRSS}
\noindent As indicated in the introduction, the decomposition \eqref{eqWDEC} of $W(\g,\h)$ is induced by several sub-root systems of $\Phi$; we introduce and discuss those sub-root systems here. 

Let $\Psi$ be a root system and recall that we write $W(\Psi)$ for its Weyl group. A subset  $\Pi\subset \Psi$ is a sub-root system (or just subsystem) if
for $\alpha,\beta\in \Pi$ we have $-\alpha\in \Pi$ and, if $\alpha+\beta\in \Psi$, then $\alpha+\beta\in \Pi$.  For a given system of positive roots $\Psi^+$ the corresponding Weyl vector is
\[\rho(\Psi) = \tfrac{1}{2}\sum\nolimits_{\alpha\in \Psi^+} \alpha.\]
Note that $(\rho(\Psi),\alpha)=1$ for simple roots $\alpha\in\Psi$, see the proof of \cite[Lemma 13.3A]{humph}, which implies:
\begin{lemma}\label{lemTriv}
Let $\Psi$ be a root system. If $\alpha\in\Psi^+$, then $(\rho(\Psi),\alpha)>0$.
\end{lemma}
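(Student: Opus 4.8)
The plan is to reduce everything to the displayed fact already recorded just above the statement, namely that $(\rho(\Psi),\alpha_i)=1$ for every simple root $\alpha_i$ of $\Psi$ (with respect to the chosen $\Psi^+$). Given that, the argument is a one-step linearity computation, so the real content is already in place and the proof will be short.

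First I would fix a base of simple roots $\alpha_1,\dots,\alpha_r$ for $\Psi$ compatible with $\Psi^+$. Then I would invoke the standard structure theorem for root systems: every $\alpha\in\Psi^+$ can be written as $\alpha=\sum_{i=1}^r c_i\alpha_i$ with all $c_i\in\Z_{\ge 0}$, and since $\alpha\neq 0$ at least one $c_i$ is strictly positive. Next I would expand by bilinearity of $(-,-)$:
\[
(\rho(\Psi),\alpha)=\sum_{i=1}^r c_i\,(\rho(\Psi),\alpha_i)=\sum_{i=1}^r c_i,
\]
using $(\rho(\Psi),\alpha_i)=1$ in the middle equality. Since the $c_i$ are nonnegative integers, not all zero, the final sum is a positive integer, hence $(\rho(\Psi),\alpha)>0$, as claimed.

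I do not anticipate any genuine obstacle here; the only thing to be careful about is to cite the correct facts — the positivity of the coefficients $c_i$ is part of the definition/basic theory of bases of root systems (e.g.\ \cite[Section 10.1]{humph}), and the normalisation $(\rho(\Psi),\alpha_i)=1$ is exactly what was recalled from the proof of \cite[Lemma 13.3A]{humph} in the sentence preceding the statement. One could phrase the conclusion even more strongly, observing that $(\rho(\Psi),\alpha)$ equals the height of $\alpha$, but for the lemma as stated it suffices to note the sum is a strictly positive integer.
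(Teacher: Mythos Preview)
Your proof is correct and is exactly the argument the paper has in mind: the paper does not spell out a separate proof but simply records $(\rho(\Psi),\alpha_i)=1$ for simple $\alpha_i$ and writes ``which implies'' before the lemma, leaving the nonnegative-integer expansion of a positive root in terms of simple roots (your \cite[Section 10.1]{humph} step) as the obvious bilinearity computation you carried out.
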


Recall that $\Phi$ is the root system of $\g^c$ with respect to $\h^c$. The subsystems of real, imaginary, compact imaginary, and noncompact imaginary roots are defined as 
\begin{align*}
\Phi^{\mathrm{re}} &= \{ \alpha\in \Phi \mid \theta(\alpha) = -\alpha\},&
\Phi^{\mathrm{im}} &= \{ \alpha\in \Phi \mid \theta(\alpha) = \alpha\},\\
\Phi^{\imm,\mathrm{c}} &= \{ \alpha\in \Phi^\imm \mid \theta(x_\alpha) = x_\alpha\},&
\Phi^{\imm,\mathrm{nc}} &= \{ \alpha\in \Phi^\imm \mid \theta(x_\alpha) = -x_\alpha\}
\end{align*}with $\Phi^\imm=\Phi^{\imm,\mathrm{c}}\cup \Phi^{\imm,\mathrm{nc}}$, see \cite[p.\ 390]{knapp}. We set $\rho^{\mathrm{re}} = \rho(\Phi^{\mathrm{re}})$ and $\rho^{\mathrm{im}} =
\rho(\Phi^{\mathrm{im}})$, and define
\[\Phi^{\mathrm{c}} = \{ \alpha\in \Phi \mid (\alpha,\rho^{\mathrm{re}})=
(\alpha,\rho^{\mathrm{im}})=0\}.\]
Note that $\Phi^{\mathrm{c}}$ is  $\theta$-invariant as $\theta(\rim)=\rim$ and
$\theta(\rre)=-\rre$. Let $\alpha$ be a positive root; Lemma~\ref{lemTriv} shows that if $\alpha$ is imaginary, then $(\alpha,\rim)\neq 0$, and if  $\alpha$ is real, then $(\alpha, \rre)\neq 0$. This proves that $\Phi^{\mathrm{c}} \cap \Phi^{\mathrm{im}} = \Phi^{\mathrm{c}} \cap \Phi^{\mathrm{re}}=\emptyset$. In the following we denote the Weyl group of $\Phi^X$  by $W^X$ for labels $X=\mathrm{c},\imm,\re,\ldots$. Note that $\theta$ acts on $\Phi$, so if $\Psi$ is a $\theta$-invariant subsystem of $\Phi$, then we denote the fixed points of $\theta$ in $W(\Psi)$ by  \[W(\Psi)^\theta=\{w\in W(\Psi)\mid \theta\circ w=w\circ\theta\}.\]

\subsection{Preliminary results} We need three preliminary results; the first comes from \cite[Lemma~3.1]{vogan}. We include an expanded proof that gives full details using basic results about root systems.

\begin{lemma}\label{lem:1}
Let $\Psi$ be a $\theta$-invariant subsystem of $\Phi$ with $\Psi\cap \Phi^{\mathrm{re}} = \Psi\cap \Phi^{\mathrm{im}}=\emptyset$. Then $\Psi$ is the disjoint orthogonal union of two subsystems, that is, $\Psi=\Psi_1\cup\Psi_2$ with $\Psi_1\cap\Psi_2=\emptyset$ and  $(\alpha,\beta)=0$ for all $\alpha\in \Psi_1$ and $\beta\in \Psi_2$. Furthermore, $\theta \colon \Psi_1 \to \Psi_2$ is an
isomorphism and\[W(\Psi) = W(\Psi_1)\times W(\Psi_2)\quad\text{and}\quad W(\Psi)^\theta = \{ (w,\theta w\theta) \mid w\in W(\Psi_1)\} \cong W(\Psi_1).\]
In particular, $W(\Psi)^\theta$ is generated by $s_\alpha s_{\theta(\alpha)}$ where $\alpha$ runs over a set of simple roots of $\Psi_1$.
\end{lemma}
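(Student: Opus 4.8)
The plan is to build the decomposition $\Psi = \Psi_1 \cup \Psi_2$ explicitly using the auxiliary vector $\rho^{\mathrm{re}}$ (or some linear functional that separates the $\theta$-eigenspaces), then verify the claimed structure of the Weyl groups. First I would use the hypothesis to analyse how $\theta$ acts on an individual root $\alpha \in \Psi$: since $\theta$ is an involution on the real span of $\Psi$ and $\alpha$ is neither real nor imaginary, the vectors $\alpha$ and $\theta(\alpha)$ are linearly independent, and $\alpha + \theta(\alpha)$ is $\theta$-fixed while $\alpha - \theta(\alpha)$ is negated by $\theta$. The key separating quantity is the sign of $(\alpha, \rho^{\mathrm{re}})$: since $\theta(\rho^{\mathrm{re}}) = -\rho^{\mathrm{re}}$, we have $(\theta(\alpha), \rho^{\mathrm{re}}) = -(\alpha, \rho^{\mathrm{re}})$ by Lemma \ref{lemForm}, so $\alpha$ and $\theta(\alpha)$ have opposite signs of this pairing; moreover it is nonzero because $\alpha$ is not imaginary (here I would mimic the remark following the definition of $\Phi^{\mathrm{c}}$, applied with a suitable choice of positive system for $\Psi$ adapted to the functional $\rho^{\mathrm{re}}$, to rule out $(\alpha,\rho^{\mathrm{re}})=0$). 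Actually the cleanest route: pick any linear functional $f$ on the span of $\Psi$ with $f \circ \theta = -f$ and $f(\alpha) \ne 0$ for all $\alpha \in \Psi$ — such an $f$ exists because $\Psi$ has no imaginary roots, so the hyperplane $\alpha^\perp$ together with the $\theta$-fixed subspace never exhausts enough directions — and set $\Psi_1 = \{\alpha \in \Psi : f(\alpha) > 0\}$, $\Psi_2 = \{\alpha \in \Psi : f(\alpha) < 0\} = -\Psi_1 = \theta(\Psi_1)$.

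Next I would verify that $\Psi_1$ and $\Psi_2$ are subsystems and that they are orthogonal. For orthogonality, the crucial point is: if $\alpha \in \Psi_1$ and $\beta \in \Psi_1$ with $(\alpha, \theta(\beta)) \ne 0$, I want a contradiction. Since $\theta(\beta) \in \Psi_2$, consider $s_\alpha(\theta(\beta)) = \theta(\beta) - \langle \theta(\beta), \alpha^\vee\rangle \alpha$, which lies in $\Psi$; if the coefficient is nonzero one can try to produce a root fixed or negated by $\theta$, or a root on which $f$ vanishes, contradicting the setup. More directly: the span of $\Psi_1 \cup \Psi_2$ decomposes under $\theta$ into its $\pm1$-eigenspaces; I would argue that $\Psi$ generates (as a root system) the orthogonal direct sum of the subsystem it meets in the $+1$-eigenspace directions — but since there are no imaginary roots, there is a genuine pairing, and a standard argument (e.g. via the fact that a root system with an involution having no fixed roots splits as $\Psi_1 \oplus \theta(\Psi_1)$) gives the orthogonal decomposition. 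Once $\Psi = \Psi_1 \sqcup \Psi_2$ is an orthogonal disjoint union of subsystems with $\theta \colon \Psi_1 \xrightarrow{\sim} \Psi_2$, the factorisation $W(\Psi) = W(\Psi_1) \times W(\Psi_2)$ is the standard fact that the Weyl group of an orthogonal union of subsystems is the direct product of the Weyl groups, with $s_\gamma$ acting trivially on the orthogonal complement.

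Finally I would identify $W(\Psi)^\theta$. An element $(w_1, w_2) \in W(\Psi_1) \times W(\Psi_2)$ commutes with $\theta$ iff $\theta w_1 \theta = w_2$ on $\Psi_2$ and $\theta w_2 \theta = w_1$ on $\Psi_1$; since $\theta$ conjugates $W(\Psi_1)$ isomorphically onto $W(\Psi_2)$ (because $s_\alpha \mapsto s_{\theta(\alpha)} = \theta s_\alpha \theta$), both conditions reduce to the single equation $w_2 = \theta w_1 \theta$, giving $W(\Psi)^\theta = \{(w, \theta w \theta) : w \in W(\Psi_1)\} \cong W(\Psi_1)$ via projection to the first factor. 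The generator statement follows since $W(\Psi_1)$ is generated by $s_\alpha$ for $\alpha$ in a simple system of $\Psi_1$, and the corresponding element of $W(\Psi)^\theta$ is $(s_\alpha, \theta s_\alpha \theta) = (s_\alpha, s_{\theta(\alpha)})$, which acts on $\Psi$ exactly as $s_\alpha s_{\theta(\alpha)}$ (the two reflections commute as $\alpha \perp \theta(\alpha)$). I expect the main obstacle to be the orthogonality of $\Psi_1$ and $\Psi_2$: showing that a $\theta$-stable subsystem with no fixed roots splits as an \emph{orthogonal} sum (not merely a disjoint union of the positive/negative halves) requires the no-imaginary-roots hypothesis in an essential way, and getting the cleanest argument — presumably by showing any root in $\Psi$ is either in the $f>0$ or $f<0$ halves and that cross pairings force an imaginary root to appear — is the delicate part.
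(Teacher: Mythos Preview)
Your decomposition via an anti-invariant functional $f$ (with $f\circ\theta=-f$) does not produce the orthogonal splitting the lemma requires. Since $f$ is linear, $f(-\alpha)=-f(\alpha)$, so your sets $\Psi_1=\{f>0\}$ and $\Psi_2=\{f<0\}$ satisfy $\Psi_2=-\Psi_1$: they are the positive and negative halves of $\Psi$, not root subsystems (neither is closed under negation), and they are certainly not orthogonal, since for any $\alpha\in\Psi_1$ one has $-\alpha\in\Psi_2$ with $(\alpha,-\alpha)\ne 0$. You notice this yourself at the end, but the sketch you offer for closing the gap (``cross pairings force an imaginary root to appear'') does not go through from this setup: knowing $(\alpha,\theta(\beta))\ne 0$ for some $\alpha,\beta$ with $f(\alpha),f(\beta)>0$ does not by itself manufacture a $\theta$-fixed root, and the same problem afflicts the $\rho^{\mathrm{re}}$ variant you mention first, since $\theta(\rho^{\mathrm{re}})=-\rho^{\mathrm{re}}$ as well.

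The paper makes the opposite choice: it picks a $\theta$-\emph{invariant} vector $v_0$ in the $+1$-eigenspace $V_1$ of $\theta$ on the real span of $\Psi$, generic enough that $(v_0,\alpha)\ne 0$ for all $\alpha\in\Psi$; such $v_0$ exists because $\Psi$ contains no \emph{real} roots, so no $\alpha$ lies entirely in $V_{-1}$. This yields a $\theta$-stable positive system $\Psi^+$. Now decompose $\Psi$ into its irreducible components $\Pi_1,\ldots,\Pi_m$; these are pairwise orthogonal and are permuted by $\theta$. If some $\Pi_i$ were $\theta$-stable, its highest root relative to $\Pi_i\cap\Psi^+$ would be $\theta$-fixed (by uniqueness of the highest root and $\theta$-stability of $\Psi^+$), hence imaginary, contradicting the hypothesis. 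Thus $\theta$ fixes no component, and grouping the $\theta$-orbits gives the genuine orthogonal partition $\Psi=\Psi_1\cup\Psi_2$ with $\theta(\Psi_1)=\Psi_2$. Your treatment of $W(\Psi)$ and $W(\Psi)^\theta$ from that point on is correct and matches the paper.
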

 
\begin{proof}
The real space $V$ spanned by $\Psi$ is $\theta$-invariant, so we have an eigenspace decomposition $V=V_1\oplus V_{-1}$ is .   Because of the $\theta$-invariance of the bilinear form, $V_1$ and $V_{-1}$ are  orthogonal. If $V_1=0$, then all roots of $\Psi$ are real, hence $\Psi=\Psi\cap \Phi^\re=\emptyset$, which is not possible; thus $V_1\ne 0$.  If $\alpha\in \Psi$ satisfies $(\alpha,v)=0$ for all $v\in V_1$, then $\alpha\in V_{-1}$, so $\alpha\in\Psi\cap \Phi^\re=\emptyset$, which is not possible; thus, there is no $\alpha\in \Psi$ which is orthogonal to $V_1$.  It follows that there is a $v_0\in V_1$ such that $(v_0,\alpha)\neq 0$ for all $\alpha\in \Psi$: simply choose $v_0$ outside the hyperplanes defined by each $\alpha\in\Psi$. We use this vector to define a positive system $\Psi^+ = \{ \alpha\in \Psi \mid (\alpha,v_0)>0\}$ for $\Psi$: indeed, setting $\alpha < \beta$ if and only if  $(\alpha,v_0)  <(\beta,v_0)$ defines  a root order, see \cite[p.\ 164]{gra6}. Note that $\theta( \Psi^+) = \Psi^+$ since $\theta(v_0)=v_0$. By \cite[Sections 10.4 \& 11.3]{humph} we can decompose $\Psi = \Pi_1 \cup \cdots \cup \Pi_m$, where the $\Pi_i$ are uniquely determined irreducible subsystems that are pairwise orthogonal; since $\Psi$ is $\theta$-invariant, $\theta$ permutes these subsystems. Suppose  $\theta(\Pi_i) = \Pi_i$ for some $i$. Then $\Pi_i^+ = \Pi_i\cap \Psi^+$ is a positive system for $\Pi_i$. The highest root $\beta_i$ of $\Pi_i^+$ is  uniquely
  determined, see \cite[Lemma 10.4A]{humph}, and because $\theta$ fixes $\Pi_i^+$ we have $\theta(\beta_i)=\beta_i$. But then $\beta_i \in \Psi\cap \Phi^{\mathrm{im}}=\emptyset$, which is impossible; this shows that $\theta(\Pi_i)\ne \Pi_i$ for all $i$. In particular, we can partition $\Psi=\Psi_1\cup\Psi_2$ such that $\theta(\Psi_1)=\Psi_2$. Now most of the statements of the lemma now follow. If $w=(w_1,w_2)\in W(\Psi)^\theta$, then for $\alpha\in \Psi_1$  we have $w(\theta(\alpha)) = w_2(\theta (\alpha))$ and $w(\theta(\alpha))=\theta(w(\alpha)) = \theta w_1(\alpha)$; hence $w_2(\theta(\alpha)) = \theta(w_1 \theta (\theta(\alpha)))$ for all $\alpha\in\Psi_1$, which shows that $w_2 = \theta w_1\theta$. For the last statement note
  that $\theta s_\alpha \theta = s_{\theta(\alpha)}$. \end{proof}

\begin{lemma}\label{lem:rts}
  Let $\Psi$ be a root system with fixed basis of simple roots $\Delta$ and real span  $V$. Let $v\in V$ be such that $(\alpha,v)\geq 0$ for
  all $\alpha\in \Delta$. Let $\Psi^v$ be the subsystem  $\{ \alpha\in \Psi \mid (\alpha,v)=0\}$, and set $W^v =\{ w\in W(\Psi) \mid w(v)=v\}$. Then $\Psi^v\cap\Delta$ is a basis of
  simple roots of $\Psi^v$ and $W(\Psi^v) = W^v$.
\end{lemma}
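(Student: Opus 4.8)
The plan is to recognise $\Psi^v$ as the ``parabolic'' subsystem of $\Psi$ generated by the simple roots orthogonal to $v$, and then to identify the two Weyl groups by an induction on word length. First I would put $J=\Psi^v\cap\Delta=\{\alpha\in\Delta\mid(\alpha,v)=0\}$ and show $\Psi^v=\Psi\cap\Z J$. For one inclusion, write a positive root $\alpha\in\Psi^+$ as $\alpha=\sum_{\beta\in\Delta}c_\beta\beta$ with all $c_\beta\ge0$; since $(\beta,v)\ge0$ for every $\beta\in\Delta$ by hypothesis, the number $(\alpha,v)=\sum_\beta c_\beta(\beta,v)$ is a sum of non-negative terms, so $(\alpha,v)=0$ forces $c_\beta=0$ whenever $(\beta,v)>0$, that is, $\alpha$ is supported on $J$. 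The same computation read in reverse shows that every root lying in $\Z J$ is orthogonal to $v$. Hence $\Psi^v=\Psi\cap\Z J$, and by the standard description of subsystems spanned by a subset of simple roots (see e.g.\ \cite[Section 10.4]{humph}), $J$ is a basis of simple roots of $\Psi^v$ and $W(\Psi^v)=\langle s_\alpha\mid\alpha\in J\rangle$.

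For the statement $W(\Psi^v)=W^v$, the inclusion $W(\Psi^v)\subseteq W^v$ is immediate: if $\alpha\in\Psi^v$ then $s_\alpha(v)=v-\langle v,\alpha^\vee\rangle\alpha=v$ because $(\alpha,v)=0$. The reverse inclusion I would prove by induction on the length $\ell(w)$ of $w\in W^v$ relative to $\Delta$. If $\ell(w)=0$ then $w=1$. If $\ell(w)\ge1$, choose a simple root $\alpha\in\Delta$ with $w(\alpha)$ negative (possible since $w\ne1$). Writing $w(\alpha)=-\sum_{\beta\in\Delta}d_\beta\beta$ with $d_\beta\ge0$ gives $(v,w(\alpha))=-\sum_\beta d_\beta(v,\beta)\le0$, whereas $(v,w(\alpha))=(w^{-1}(v),\alpha)=(v,\alpha)\ge0$ because $w^{-1}(v)=v$; hence $(v,\alpha)=0$, so $\alpha\in J$ and $s_\alpha(v)=v$. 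Since $w(\alpha)$ is negative we have $\ell(ws_\alpha)=\ell(w)-1$, and $ws_\alpha(v)=w(v)=v$, so $ws_\alpha\in W^v$ and the induction hypothesis gives $ws_\alpha\in W(\Psi^v)$; as $s_\alpha\in W(\Psi^v)$ we conclude $w\in W(\Psi^v)$, completing the induction.

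The one place that genuinely needs some background is the first step, namely that $\Psi\cap\Z J$ is again a root system having $J$ as simple system --- the well-known fact about parabolic (Levi) subsystems --- together with the standard length fact that $\ell(ws_\alpha)=\ell(w)-1$ precisely when $w(\alpha)$ is negative; granting these, the argument is elementary, and I expect no real obstacle beyond assembling these standard ingredients.
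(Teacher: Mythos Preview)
Your proof is correct and follows essentially the same approach as the paper: the first part is identical (dominance of $v$ forces the support of a positive root in $\Psi^v$ to lie in $J=\Psi^v\cap\Delta$), and for $W^v\subseteq W(\Psi^v)$ both arguments show that every simple reflection in a reduced expression for $w\in W^v$ lies in $J$. The only cosmetic difference is that you peel off one simple reflection at a time by induction on length, whereas the paper takes a full reduced expression $w=s_{i_1}\cdots s_{i_t}$ and shows directly (via the positivity of $s_{i_t}\cdots s_{i_j}(\alpha_{i_{j-1}})$) that each $\alpha_{i_j}$ is orthogonal to $v$.
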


\begin{proof}
  This proof follows standard ideas in Lie theory, see for example,   \cite[Lemma 10.3B]{humph} and   \cite[Lemma 8.3.4]{gra6}.  
Write $\Delta = \{\alpha_1,\ldots,\alpha_m\}$ and $s_i=s_{\alpha_i}$ for each $i$. Every $\alpha\in \Psi^v\cap \Psi^+$ can be written as  $\alpha = \sum_i k_i \alpha_i$ with integers $k_1,\ldots,k_m\geq 0$, see \cite[Section 10.1]{humph}. By assumption $0=(\alpha,v) = \sum_i k_i (\alpha_i,v)$, so $k_i$ can only be nonzero if $\alpha_i \in \Psi^v\cap\Delta$.  This proves
the first part.

If $\alpha\in \Psi^v$, then $s_\alpha(v)=v$, and we see that $W(\Psi^v)\subset W^v$.  Let $w\in W(\Psi)$  with reduced expression  $w=s_{i_1}\cdots s_{i_t}$ for some $t>0$. For $1\leq j\leq t+1$ set
$v_j = s_{i_j}\cdots s_{i_t}(v)$ with  $v_{t+1}=v$, so that
$$(v_j,\alpha_{i_{j-1}}) = (s_{i_j}\cdots s_{i_t}(v),\alpha_{i_{j-1}}) =(v,s_{i_t}\cdots s_{i_j}(\alpha_{i_{j-1}}))$$for all $2\leq j\leq t+1$. Each $\alpha_i$ permutes the positive roots other than $\alpha_i$, and $s_i(\alpha_i)=-\alpha_i$, see \cite[Lemma 10.2.B]{humph}. Since $s_{i_t}\cdots s_{i_j}s_{i_{j-1}}(\alpha_{i_{j-1}})$
is a negative root, see \cite[Corollary 8.3.3]{gra6}, the root $s_{i_t}\cdots s_{i_j}(\alpha_{i_{j-1}})$ is positive; now $(v_j,\alpha_{i_{j-1}})\geq 0$ by the assumption on $v$. Thus, $v_{j-1} = s_{i_{j-1}}(v_j) =v_j-a_{j-1}\alpha_{i_{j-1}}$ where $a_{j-1} =2(v_j,\alpha_{i_{j-1}})/(\alpha_{i_{j-1}},\alpha_{i_{j-1}})\geq 0$. 
So $w(v)=v$ if and only if $v=v-a_1\alpha_{i_{j_1}}-\ldots-a_t\alpha_{i_{j_t}}$, if and only if each $a_i=0$, if and only if each $v_j=v$ and $(v,\alpha_{i_j})=0$.
\end{proof}

The following result is attributed to Chevalley in  \cite[Proposition 3.8]{vogan}; since we could not find a proof in the literature, we include it here.

\begin{lemma}\label{lem:2}
  Let $\Psi$ be a root system contained in a real  space $V$, with Weyl group $W=W(\Psi)$.
For given $\lambda_1,\ldots,\lambda_m\in V$ define \begin{eqnarray*} W^{\lambda_1,\ldots,\lambda_m}& =& \{ w\in W \mid  w(\lambda_i)=\lambda_i\text{ for all $i$}\}\quad{\text{and}}\\ \Psi^{\lambda_1,\ldots,\lambda_m} &=& \{ \alpha\in \Psi\mid  (\alpha,\lambda_i) = 0\text{ for all $i$}\}.
\end{eqnarray*}
Then $\Psi^{\lambda_1,\ldots,\lambda_m}$ is a
  subsystem of $\Psi$ with Weyl group $W(\Psi^{\lambda_1,\ldots,\lambda_m}) = W^{\lambda_1,\ldots,\lambda_m}$.
\end{lemma}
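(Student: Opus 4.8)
The statement is a multi-parameter generalization of Lemma~\ref{lem:rts}, so the natural strategy is induction on $m$, using Lemma~\ref{lem:rts} as the base case $m=1$ after a preliminary reduction. The only subtlety is that Lemma~\ref{lem:rts} is stated for a vector $v$ lying in the closure of the fundamental Weyl chamber (i.e.\ $(\alpha,v)\ge 0$ for all simple $\alpha$), whereas here the $\lambda_i$ are arbitrary. So the first step is to arrange a good chamber: I would choose a generic vector $v_0$ in the intersection $\bigcap_i \lambda_i^{\perp}$ inside the real span of $\Psi^{\lambda_1,\dots,\lambda_m}$-complement... more cleanly, pick a Weyl chamber whose closure contains $\lambda_1$; replacing $\Psi$ by a $W$-translate we may assume $\lambda_1$ lies in the closed fundamental chamber, so $(\alpha,\lambda_1)\ge 0$ for all $\alpha\in\Delta$. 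Note this replacement does not affect the truth of the statement since $W$ and all the defined objects are $W$-equivariant.

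Next I apply Lemma~\ref{lem:rts} with $v=\lambda_1$: it gives $\Psi^{\lambda_1}$ a root system with simple system $\Delta_1:=\Delta\cap\Psi^{\lambda_1}$, and $W(\Psi^{\lambda_1}) = W^{\lambda_1} = \{w\in W : w(\lambda_1)=\lambda_1\}$. Now observe that
\[
W^{\lambda_1,\dots,\lambda_m} = \{\, w\in W^{\lambda_1} : w(\lambda_i)=\lambda_i \text{ for } i=2,\dots,m\,\}
\]
is exactly the stabilizer, \emph{inside the Weyl group} $W(\Psi^{\lambda_1})$, of the vectors $\lambda_2,\dots,\lambda_m$ — but these vectors need not lie in the span of $\Psi^{\lambda_1}$. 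This is harmless: what matters is the orthogonal projections $\mu_i$ of $\lambda_i$ onto $V_1:=\mathrm{span}_{\R}(\Psi^{\lambda_1})$. Since $W(\Psi^{\lambda_1})$ acts trivially on $V_1^{\perp}$, one has $w(\lambda_i)=\lambda_i \iff w(\mu_i)=\mu_i$ for $w\in W(\Psi^{\lambda_1})$; and likewise $(\alpha,\lambda_i)=(\alpha,\mu_i)$ for $\alpha\in\Psi^{\lambda_1}$, so $(\Psi^{\lambda_1})^{\mu_2,\dots,\mu_m} = \Psi^{\lambda_1,\dots,\lambda_m}$. Hence the claim for $(\Psi,\lambda_1,\dots,\lambda_m)$ follows from the claim for $(\Psi^{\lambda_1}, \mu_2,\dots,\mu_m)$, which has $m-1$ parameters, closing the induction.

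The main obstacle is bookkeeping around the chamber normalization and the projection argument: one must check that the hypothesis of Lemma~\ref{lem:rts} can indeed be met (choosing a chamber whose closure contains $\lambda_1$ — which holds since the closed chambers cover $V$), and that restricting to $V_1$ genuinely identifies the stabilizer subgroup and the orthogonal subsystem on the nose. Both are standard, but they are the crux: once the reduction $W^{\lambda_1,\dots,\lambda_m} = W(\Psi^{\lambda_1})^{\mu_2,\dots,\mu_m}$ is in place, the inductive step is immediate, and the base case $m=1$ is precisely Lemma~\ref{lem:rts}. It is also worth remarking that $\Psi^{\lambda_1,\dots,\lambda_m}$ being a subsystem is clear directly from the definition (closed under negation, and under sums that land in $\Psi$, because orthogonality to each $\lambda_i$ is preserved under these operations), so that part needs no induction.
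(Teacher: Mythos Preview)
Your argument is correct and is essentially the paper's own proof: induction on $m$, reducing the case $m=1$ to Lemma~\ref{lem:rts} after conjugating $\lambda_1$ into the closed fundamental chamber (the paper phrases this as choosing $w\in W$ with $w(\lambda)$ dominant and then conjugating back), and handling the inductive step by passing to the smaller root system $\Psi^{\lambda_1}$. Your explicit projection of the remaining $\lambda_i$ onto $\mathrm{span}_\R(\Psi^{\lambda_1})$ is a detail the paper leaves implicit (it simply writes $(\Psi^{\lambda_1,\dots,\lambda_{m-1}})^{\lambda_m}$ without comment), so if anything your version is a shade more careful.
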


\begin{proof}
  Clearly, $\Psi^{\lambda_1,\ldots,\lambda_m}$ is a subsystem. We use induction on $m$ and first consider $m=1$ and $\lambda=\lambda_1$. Fix a set of simple roots $\Delta$ with positive system $\Psi^+$. By the proof of \cite[Theorem 10.3(a)]{humph}, there  is $w\in W$ such that $(\alpha,w(\lambda))\geq 0$ for all $\alpha\in  \Psi^+$. By Lemma \ref{lem:rts}
  this implies that the subsystem $\Psi^{w(\lambda)}$  has basis of simple roots $\Delta^{w(\lambda)} = \{ \alpha\in \Delta \mid (\alpha,w(\lambda))=0\}$.
  By the same lemma, the group $W^{w(\lambda)}$ is generated by all $s_\alpha$ with  $\alpha\in \Delta^{w(\lambda)}$. Now $\Psi^\lambda = w^{-1}(\Psi^{w(\lambda)})$ and
  $W^\lambda = w^{-1} W^{w(\lambda)} w$, so $W^\lambda$ is generated
  by all $w^{-1} s_{\alpha} w$ with $\alpha\in \Delta^{w(\lambda)}$. Since $w^{-1}s_{\alpha}w = s_{w^{-1}(\alpha)}$ and $w^{-1}(\Delta^{w(\lambda)})$ is a basis
  of $\Psi^\lambda$, it follows that $W^\lambda = W(\Psi^\lambda)$. Lastly,  observe that $\Psi^{\lambda_1,\ldots,\lambda_m} = (\Psi^{\lambda_1,\ldots,\lambda_{m-1}})^{\lambda_m}$ and $W^{\lambda_1,\ldots,\lambda_m} =  (W^{\lambda_1,\ldots,\lambda_{m-1}})^{\lambda_m}$, so the induction step follows by the same argument. 
\end{proof}

\subsection{The Weyl group $W(\Phi)^\theta$}\label{secWinWT} 
Recall that $W(\g,\h)=N_{G^\circ}(\h)/Z_{G^\circ}(\h)$ and that the full Weyl group $W(\Phi)\cong N_{G^c}(\h^c)/Z_{G^c}(\h^c)$ is generated by all reflections $s_\alpha$. The embedding $N_{G^\circ}(\h)\to  N_{G^c}(\h^c)$ induces an embedding of the real Weyl group into $W(\Phi)$, that is, we consider $W(\g,\h)$ as a subgroup
\begin{eqnarray*}W(\g,\h)\leq W(\Phi);
\end{eqnarray*}
see also \cite[(7.93) \& Proposition 7.19(c)]{knapp} or \cite[p.\ 950]{vogan}. In fact, it is also true that  $W(\g,\h)\leq W(\Phi)^\theta$: by the proof of \cite[Proposition~3]{dgrt}, the group $G=G^c(\R)$ is reductive in the sense of \cite{knapp}, and so $G^\circ$ is reductive by \cite[Proposition 7.19(f)]{knapp}; now \cite[(7.92b)]{knapp} shows that every element in $W(\g,\h)$ has a representative in $G^c$ that commutes with the Cartan involution $\theta$, which yields \[W(\g,\h)\leq W(\Phi)^\theta.\]

As a first step towards determining $W(\g,\h)$, we now describe the structure of  $W(\Phi)^\theta$. The following result comes from \cite[Proposition~3.12]{vogan}; we include a modified proof adapted to our set-up.

\begin{prop}\label{prop:3}
  We have $W(\Phi)^\theta = (W^{\mathrm{c}})^\theta \ltimes (W^{\mathrm{re}}\times W^{\mathrm{im}})$ with $W^\re,W^\imm\unlhd W(\Phi)^\theta$.
\end{prop}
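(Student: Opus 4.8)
The plan is to identify inside $W(\Phi)^\theta$ the normal subgroup $W^{\re}\times W^{\imm}$ and a complementary subgroup isomorphic to $(W^{\mathrm{c}})^\theta$, and then check that the multiplication map gives a semidirect product decomposition. First I would observe that $\Phi^\re$ and $\Phi^\imm$ are orthogonal subsystems of $\Phi$: any real root $\alpha$ and imaginary root $\beta$ satisfy $(\alpha,\beta) = (\theta\alpha,\theta\beta) = (-\alpha,\beta)$, so $(\alpha,\beta)=0$. Hence the reflections $s_\alpha$ ($\alpha\in\Phi^\re$) and $s_\beta$ ($\beta\in\Phi^\imm$) commute, $W^\re\cap W^\imm=1$ (they act nontrivially only on orthogonal spaces), and $W^\re\times W^\imm$ is a well-defined subgroup of $W(\Phi)$. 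Each factor lies in $W(\Phi)^\theta$: if $\alpha\in\Phi^\re$ then $\theta s_\alpha\theta = s_{\theta(\alpha)} = s_{-\alpha} = s_\alpha$, and similarly for imaginary roots; so $W^\re$ and $W^\imm$ consist of $\theta$-fixed elements.

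Next I would prove normality of $W^\re$ and $W^\imm$ in $W(\Phi)^\theta$. The point is that $\rre = \rho(\Phi^\re)$ and $\rim = \rho(\Phi^\imm)$ are characterised intrinsically: $\Phi^\re$ and $\Phi^\imm$ are the $(-1)$- and $(+1)$-eigenspace-supported subsystems of the involution $\theta$ on $\Phi$, so any $w\in W(\Phi)^\theta$ permutes $\Phi^\re$ among itself and $\Phi^\imm$ among itself (since $w$ commutes with $\theta$, it preserves each eigenvalue condition $\theta\alpha=\pm\alpha$). Therefore $w W^\re w^{-1} = W(w(\Phi^\re)) = W^\re$ and likewise $w W^\imm w^{-1} = W^\imm$. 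This also shows $w$ fixes the sets of positive real (resp. imaginary) roots up to the sign choices; more usefully, conjugation by $w$ sends $\rre$ to $\pm\rre$ and preserves the hyperplane orthogonal to $\rre$ and $\rim$, so $w(\Phi^{\mathrm c}) = \Phi^{\mathrm c}$.

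Now for the complement. By Lemma~\ref{lem:2} applied with $\lambda_1 = \rre$, $\lambda_2 = \rim$, the subsystem $\Phi^{\mathrm c} = \Phi^{\rre,\rim}$ has Weyl group $W^{\mathrm c} = W^{\rre,\rim} = \{w\in W(\Phi)\mid w(\rre)=\rre,\ w(\rim)=\rim\}$, i.e.\ the pointwise stabiliser of $\{\rre,\rim\}$ in $W(\Phi)$. The key structural fact is that an element $w\in W(\Phi)^\theta$ permutes $\Phi^\re$, hence — after choosing the positive system $(\Phi^\re)^+$ via a $\theta$-fixed vector as in Lemma~\ref{lem:1}, noting $\theta$ acts as $-1$ on the span of $\Phi^\re$ so this requires a little care — there is a unique $w_\re\in W^\re$ with $w_\re^{-1}w$ preserving $(\Phi^\re)^+$, and similarly a unique $w_\imm\in W^\imm$ with $w_\imm^{-1}w_\re^{-1}w$ preserving $(\Phi^\imm)^+$; then $w_\imm^{-1}w_\re^{-1}w$ fixes $\rre$ and $\rim$, so lies in $W^{\mathrm c}$, and being a product of $\theta$-fixed elements it lies in $(W^{\mathrm c})^\theta$. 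This gives $W(\Phi)^\theta = (W^{\mathrm c})^\theta\cdot(W^\re\times W^\imm)$. Uniqueness of $w_\re, w_\imm$ (simple transitivity of the Weyl group on positive systems of $\Phi^\re$ and $\Phi^\imm$) together with $(W^{\mathrm c})^\theta\cap(W^\re\times W^\imm)=1$ — which holds because a nontrivial element of $W^\re\times W^\imm$ moves some real or imaginary root to its negative and hence cannot fix both $\rre$ and $\rim$ by Lemma~\ref{lemTriv} — yields that the product is semidirect, with normal factor $W^\re\times W^\imm$.

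I expect the main obstacle to be the bookkeeping around the $\theta$-action on $\Phi^\re$: since $\theta = -1$ on the span of $\Phi^\re$, the reflection $\theta$-invariance is automatic and does not by itself pin down a $\theta$-stable positive system, so one must argue carefully that $w\in W(\Phi)^\theta$ (acting on the $(-1)$-eigenspace, where it commutes with $-\mathrm{id}$ trivially) still normalises $W^\re$ — this follows purely from $w$ permuting $\Phi^\re$ as a set — and that the "Iwasawa-type" factorisation picking out $w_\re$ and $w_\imm$ is well-defined and lands in the $\theta$-fixed subgroup. The rest is a matter of assembling the orthogonality, the intrinsic characterisation of $\rre,\rim$, and Lemma~\ref{lem:2}.
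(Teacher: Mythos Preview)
Your approach is essentially the paper's: both prove normality of $W^{\re}$ and $W^{\imm}$ from $w(\Phi^{\re})=\Phi^{\re}$ and $w(\Phi^{\imm})=\Phi^{\imm}$, factor an arbitrary $w\in W(\Phi)^\theta$ using simple transitivity of $W^{\re}$ and $W^{\imm}$ on their positive systems, invoke Lemma~\ref{lem:2} to place the remaining factor in $W^{\mathrm c}$ (hence in $(W^{\mathrm c})^\theta$ since all pieces are $\theta$-fixed), and check trivial intersection via Lemma~\ref{lemTriv}. Two minor cleanups: your aside that ``$w$ sends $\rre$ to $\pm\rre$'' is not true in general (and is not needed---you never use $w(\Phi^{\mathrm c})=\Phi^{\mathrm c}$), and the appeal to Lemma~\ref{lem:1} and to a $\theta$-stable positive system for $\Phi^{\re}$ is misplaced---any positive systems of $\Phi^{\re}$ and $\Phi^{\imm}$ work, exactly as in the paper, so your ``expected obstacle'' does not arise.
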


\begin{proof}A small computation shows that if $\alpha\in \Phi^{\mathrm{re}}$ and $\beta\in
  \Phi$, then  $\theta(s_\alpha(\beta)) = s_\alpha(\theta(\beta))$, so that
  $W^{\mathrm{re}}\leq W(\Phi)^\theta$. If $\alpha\in \Phi^{\mathrm{re}}$ and $w\in W(\Phi)^\theta$, then $\theta(w(\alpha))  = w(\theta(\alpha))=w(-\alpha) = -w(\alpha)$, so  $w(\alpha) \in
  \Phi^{\mathrm{re}}$. Now from $ws_\alpha w^{-1}= s_{w(\alpha)}$ it follows that
  $W^{\mathrm{re}}\unlhd W(\Phi)^\theta$. The argument for $W^{\mathrm{im}}\unlhd W(\Phi)^\theta$ is similar. Let $\Phi^{\re,+}$ and $\Phi^{\imm,+}$ be positive systems for $\Phi^\re$ and $\Phi^\imm$, respectively.
  If  $w\in W(\Phi)^\theta$, then $w(\Phi^{\mathrm{re}}) = \Phi^{\mathrm{re}}$
  and $w(\Phi^{\mathrm{im}}) = \Phi^{\mathrm{im}}$, and so $w^{-1} (\Phi^{\mathrm{re},+})$ and
  $w^{-1} (\Phi^{\mathrm{im},+})$ are positive systems of $\Phi^{\mathrm{re}}$ and 
  $\Phi^{\mathrm{im}}$, respectively. Therefore 
  $w^{-1} (\Phi^{\mathrm{re},+}) = \mu^{-1}(\Phi^{\mathrm{re},+})$ and  $w^{-1} (\Phi^{\mathrm{im},+}) = \nu^{-1}(\Phi^{\mathrm{im},+})$ for some $\mu\in W^{\mathrm{re}}$ and
  $\nu\in W^{\mathrm{im}}$, respectively, see \cite[Section 10.3]{humph}. Lemma \ref{lemForm} shows that $(\Phi^{\mathrm{re}},\Phi^{\mathrm{im}})=0$, which implies  that $\mu$ is the identity on $\Phi^{\mathrm{im}}$ and   $\nu$ is the identity on $\Phi^{\mathrm{re}}$. It follows that $\Phi^{\mathrm{re},+} = w\mu^{-1}\nu^{-1} (\Phi^{\mathrm{re},+})$ and $\Phi^{\mathrm{im},+} = w\mu^{-1}\nu^{-1} (\Phi^{\mathrm{im},+})$. Setting $w_1 = w\mu^{-1}\nu^{-1}$, we have $w_1(\rre) = \rre$ and  $w_1(\rim)=\rim$.
  As $\Phi^{\mathrm{c}} = \{ \alpha\in \Phi \mid (\alpha,\rre)=(\alpha,\rim)=0\}$, Lemma~\ref{lem:2} shows that $w_1\in W^{\mathrm{c}}$. Since $\theta$ commutes with each of  $w,\mu,\nu$, it also commutes with $w_1$. Now $w=w_1\mu\nu$ shows that  $w\in (W^{\mathrm{c}})^\theta W^{\mathrm{re}} W^{\mathrm{im}}$.
  Every element of $(W^{\mathrm{c}})^\theta$ fixes $\rre$ and $\rim$, but no nontrivial element of
  $W^{\mathrm{re}}W^{\mathrm{im}}$ does that. Hence $(W^{\mathrm{c}})^\theta \cap W^{\mathrm{re}}
  W^{\mathrm{im}} = \{ 1\}$.
\end{proof}

\section{The real Weyl group}\label{secRWG}
\noindent We now look at the real Weyl group. As a preliminary step, in the next lemma we recall the  following
facts from \cite[Lemmas 5.1.4 \& 5.2.22]{gra16} and \cite[Lemma 6.1]{dfg}. Recall that we have chosen a Chevalley basis of $\g^c$ with elements $h_1,\ldots,h_\ell$ and $x_\alpha$ with $\alpha$ running over the root system $\Phi$ of $\g^c$.

\begin{lemma}\label{lemRaLa}
  Let $\g^c$ and $\g$ be as before.
  \begin{ithm}
\item If $w\in \g$ is nilpotent, then
  $t\mapsto \exp( t\ad w)$ for $t\in [0,1]$ is a path from the identity to
  $\exp(\ad w)$, so   $\exp( \ad w)\in G^\circ$.  The element $\exp( t\ad x_\alpha) \exp(-t^{-1}\ad x_{-\alpha}) \exp( t\ad x_\alpha)$
  lies in $N_{G^c}(\h^c)$ and maps to the reflection $s_\alpha$ in the
  Weyl group $W(\Phi)=N_{G^c}(\h^c)/Z_{G^c}(\h^c)$, independently of $t\in \C^*$.
 \item   Let $\alpha\in\Phi$; there exist
  $\lambda_\alpha,r_\alpha\in\C$ such that the following hold. First,  $\theta(x_\alpha)=\lambda_\alpha x_{\alpha\circ\theta}$ and $\lambda_\alpha^{-1}=\lambda_{-\alpha}=\lambda_{\alpha\circ\theta}$; moreover $\theta(h_\alpha)=h_{\alpha\circ\theta}$. Second,  $\sigma(x_\alpha)=r_\alpha x_{-\alpha\circ\theta}$ and
$r_\alpha^{-1}=r_{-\alpha}=\overline{r_{-\alpha\circ\theta}}$ (complex
 conjugate); moreover $\sigma(h_\alpha)=h_{-\alpha\circ\theta}$. Third,  $r_\alpha \lambda_\alpha$ is real.
  \end{ithm} 
\end{lemma}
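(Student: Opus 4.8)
For part~a) the plan is first to observe that a nilpotent $w\in\g$ gives a nilpotent endomorphism $\ad w$, so that $\exp(t\ad w)=\sum_{k\ge0}\tfrac{t^k}{k!}(\ad w)^k$ is a finite sum, polynomial and hence continuous in $t$; each $\exp(t\ad w)$ is an automorphism of $\g^c$ (the exponential of a derivation) which preserves $\g$ because $w\in\g$, so it lies in $G$, and the path $t\mapsto\exp(t\ad w)$ on $[0,1]$ then witnesses $\exp(\ad w)\in G^\circ$. For the Weyl-group representative I would set $n_\alpha(t)=\exp(t\ad x_\alpha)\exp(-t^{-1}\ad x_{-\alpha})\exp(t\ad x_\alpha)$, which lies in $G^c$, and compute its action on $\h^c=\ker\alpha\oplus\C h_\alpha$: the three factors each fix $\ker\alpha$ pointwise (since $[x_{\pm\alpha},h]=0$ when $\alpha(h)=0$), while on the $\mathfrak{sl}_2$-subalgebra $\langle x_\alpha,x_{-\alpha},h_\alpha\rangle$ the element $n_\alpha(t)$ acts as conjugation by $\bigl(\begin{smallmatrix}0&t\\-t^{-1}&0\end{smallmatrix}\bigr)\in\SL_2(\C)$, which sends $h_\alpha\mapsto-h_\alpha$. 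Hence $n_\alpha(t)$ normalises $\h^c$ and acts there as the reflection $s_\alpha$ regardless of $t\in\C^*$, and the stated independence in $W(\Phi)=N_{G^c}(\h^c)/Z_{G^c}(\h^c)$ follows immediately.

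For part~b) I would first deal with the constants $\lambda_\alpha$: the computation preceding Lemma~\ref{lemForm} already shows $\theta(x_\alpha)$ lies in the $\theta(\alpha)$-root space, so $\theta(x_\alpha)=\lambda_\alpha x_{\theta(\alpha)}$ for a unique $\lambda_\alpha\in\C^*$; using $\theta(h_\mu')=h_{\theta(\mu)}'$ and the $\theta$-invariance of $(-,-)$ from the proof of Lemma~\ref{lemForm} I then get $\theta(h_\alpha)=\theta\bigl(2h_\alpha'/(\alpha,\alpha)\bigr)=h_{\theta(\alpha)}$, while $h_\alpha=[x_\alpha,x_{-\alpha}]$ forces $\theta(h_\alpha)=\lambda_\alpha\lambda_{-\alpha}h_{\theta(\alpha)}$; comparing gives $\lambda_\alpha\lambda_{-\alpha}=1$, and applying $\theta^2=\mathrm{id}$ to $\theta(x_\alpha)=\lambda_\alpha x_{\theta(\alpha)}$ gives $\lambda_\alpha\lambda_{\theta(\alpha)}=1$, so $\lambda_{-\alpha}=\lambda_{\theta(\alpha)}=\lambda_\alpha^{-1}$.

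Next I would turn to $\sigma$. Since $\sigma$ fixes $\h\subseteq\g$ pointwise it stabilises $\h^c$, and combining $\sigma\theta=\tau$ with the standard fact that a compact conjugation satisfies $\overline{\alpha(\tau(h))}=-\alpha(h)$ for all $h\in\h^c$ (roots take purely imaginary values on the compact Cartan subalgebra $\h^c\cap\g^\tau$), the same kind of root-space computation as in Section~\ref{secNot} shows $\sigma(x_\alpha)$ lies in the $(-\theta(\alpha))$-root space, so $\sigma(x_\alpha)=r_\alpha x_{-\theta(\alpha)}$ with $r_\alpha\in\C^*$; using $\kappa(\sigma(x),\sigma(y))=\overline{\kappa(x,y)}$ I similarly get $\sigma(h_\alpha)=h_{-\theta(\alpha)}$, whence $r_\alpha r_{-\alpha}=1$ via $h_\alpha=[x_\alpha,x_{-\alpha}]$, and $\sigma^2=\mathrm{id}$ gives $\overline{r_\alpha}\,r_{-\theta(\alpha)}=1$, i.e.\ $r_\alpha^{-1}=\overline{r_{-\theta(\alpha)}}$. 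For the reality of $r_\alpha\lambda_\alpha$ I would compute $\tau(x_\alpha)=\sigma(\theta(x_\alpha))=\overline{\lambda_\alpha}\,r_{\theta(\alpha)}\,x_{-\alpha}$; writing $\tau(x_\alpha)=c_\alpha x_{-\alpha}$, the fact that $\tau$ is a compact conjugation (so $\tau(h_\alpha)=-h_\alpha$ and $\tau^2=\mathrm{id}$) forces $c_\alpha c_{-\alpha}=\overline{c_\alpha}c_{-\alpha}=1$, hence $c_\alpha\in\R$, while the relations already obtained identify $c_\alpha=\overline{\lambda_\alpha}\,\overline{r_\alpha}=\overline{\lambda_\alpha r_\alpha}$, so $\lambda_\alpha r_\alpha=c_\alpha\in\R$.

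The hardest part will be the careful bookkeeping of how $\theta$, $\sigma$ and $\tau=\sigma\theta$ permute and rescale the Chevalley basis, and in particular the reality of $\lambda_\alpha r_\alpha$: the root-space identifications for $\sigma(x_\alpha)$ and $\tau(x_\alpha)$, as well as this reality statement, all rest on the standard fact that the roots take purely imaginary values on the compact Cartan subalgebra. A shorter alternative, which is what the paper in fact does, is to invoke that the Chevalley basis has been fixed compatibly with the construction of the real form and to cite \cite{gra16} and \cite{dfg} for these identities.
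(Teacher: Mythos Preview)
Your argument is correct in all essentials. The paper itself gives no proof here: the lemma is introduced with ``we recall the following facts from \cite[Lemmas 5.1.4 \& 5.2.22]{gra16} and \cite[Lemma 6.1]{dfg}'' and nothing further, exactly as you anticipate in your final paragraph. So your proposal is not an alternative proof to compare against but rather a self-contained justification of facts the paper imports by citation.

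Two small remarks on the write-up. First, your route to $\sigma(x_\alpha)\in\g^c_{-\theta(\alpha)}$ via the compact conjugation $\tau$ is fine, but it is slightly more transparent to work directly on the $\theta$-eigenspace decomposition $\h=(\h\cap\fk)\oplus(\h\cap\fp)$: roots are purely imaginary on $\h\cap\fk$ and real on $\h\cap\fp$, and from this $\overline{\alpha(\sigma(h))}=-\alpha(\theta(h))$ for $h\in\h$ follows by a two-line case split, without having to pass through $\h^c\cap\g^\tau$. Second, in deducing $c_\alpha=\overline{\lambda_\alpha r_\alpha}$ you implicitly use $r_{\theta(\alpha)}=\overline{r_\alpha}$; this does follow from the relations you have already derived (apply $r_\beta^{-1}=\overline{r_{-\theta(\beta)}}$ with $\beta=-\alpha$ together with $r_{-\alpha}=r_\alpha^{-1}$), but it would be worth making that one line explicit.
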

 
The following lemma exhibits some subgroups of $W(\g,\h)$.

\begin{lemma} We have  $W^{\mathrm{re}},(W^{\mathrm{c}})^\theta,W^{\imm,\mathrm{c}}\leq W(\g,\h)$.
\end{lemma}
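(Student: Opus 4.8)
The plan is to show that each of the three subgroups $W^{\re}$, $(W^{\mathrm{c}})^\theta$, and $W^{\imm,\mathrm{c}}$ is generated by reflections (or products of reflections) that are represented by elements of $N_{G^\circ}(\h)$, using Lemma~\ref{lemRaLa}. The key observation is that an element $w$ of the abstract Weyl group lies in $W(\g,\h)$ precisely when it has a representative $n\in N_{G^c}(\h^c)$ that both normalizes $\h$ (equivalently, commutes with $\sigma$ on $\h$, so $n\in G$) and lies in the identity component $G^\circ$. Since each generator will be built as $\exp(\ad w)$ for suitable nilpotent $w\in\g$, part~(a) of Lemma~\ref{lemRaLa} automatically places it in $G^\circ$, so the only real content is arranging that the representative preserves $\g$.

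First I would treat $W^{\re}$. For $\alpha\in\Phi^{\re}$ we have $\theta(\alpha)=-\alpha$, and I want to produce a representative of $s_\alpha$ of the form $n_\alpha(t)=\exp(t\ad x_\alpha)\exp(-t^{-1}\ad x_{-\alpha})\exp(t\ad x_\alpha)$ with all three exponents nilpotent elements of $\g$. By Lemma~\ref{lemRaLa}(b), $\sigma(x_\alpha)=r_\alpha x_{-\alpha\circ\theta}=r_\alpha x_\alpha$ since $\alpha\circ\theta=-\alpha$, and $r_\alpha\lambda_\alpha$ is real; combined with $\lambda_\alpha^{-1}=\lambda_{-\alpha}$ and $\theta(x_\alpha)=\lambda_\alpha x_{-\alpha}$ one can rescale $x_\alpha$ (replacing it by $cx_\alpha$ for a suitable $c\in\C^*$) so that the rescaled vector lies in $\g$; alternatively, one shows directly that $x_\alpha+\sigma(x_\alpha)$ or $\imath(x_\alpha-\sigma(x_\alpha))$ (up to scaling $x_{-\alpha}$ accordingly) gives a real nilpotent element whose triple-exponential product still represents $s_\alpha$ by the $t$-independence in Lemma~\ref{lemRaLa}(a). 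Since the $s_\alpha$ for $\alpha\in\Phi^{\re}$ generate $W^{\re}$, this yields $W^{\re}\leq W(\g,\h)$.

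Next, for $W^{\imm,\mathrm{c}}$: here $\alpha\in\Phi^{\imm,\mathrm{c}}$ means $\theta(x_\alpha)=x_\alpha$, so $x_\alpha\in\fk\subset\g$ is already a real nilpotent element (after checking it is fixed by $\sigma\circ\theta$ only up to the compact-structure sign — more precisely one uses that $\theta$-fixed root vectors, suitably normalized, can be taken in $\g$, cf.\ the construction in \cite[Section 2]{dfg}). The same triple-exponential product then represents $s_\alpha$ and lies in $N_{G^\circ}(\h)$, and these reflections generate $W^{\imm,\mathrm{c}}$. Finally, for $(W^{\mathrm{c}})^\theta$ I would use Lemma~\ref{lem:1} applied to $\Psi=\Phi^{\mathrm{c}}$ (which satisfies $\Phi^{\mathrm{c}}\cap\Phi^{\re}=\Phi^{\mathrm{c}}\cap\Phi^{\imm}=\emptyset$, as established just before Lemma~\ref{lem:1} in the text): $(W^{\mathrm{c}})^\theta$ is generated by the elements $s_\alpha s_{\theta(\alpha)}$ for $\alpha$ in a set of simple roots of $\Phi^{\mathrm{c}}_1$. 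For each such $\alpha$, neither $x_\alpha$ nor $\theta(x_\alpha)=\lambda_\alpha x_{\theta(\alpha)}$ need lie in $\g$, but the product $\exp(\ad x_\alpha)$-type representatives of $s_\alpha$ and $s_{\theta(\alpha)}$ can be combined so that the resulting element commutes with $\sigma$: since $\theta$ maps the $\alpha$-data to the $\theta(\alpha)$-data and $\sigma=\tau\circ\theta$ with $\tau$ the compact structure, a careful choice of the parameter $t$ in the triple product for $\alpha$ together with the $\sigma$-conjugate choice for $\theta(\alpha)$ produces a $\sigma$-stable product; alternatively one argues that $x_\alpha+\theta(x_\alpha)$ and related combinations give real nilpotent elements whose exponentials realize $s_\alpha s_{\theta(\alpha)}$.

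The hard part will be the bookkeeping with the constants $\lambda_\alpha$ and $r_\alpha$ from Lemma~\ref{lemRaLa}(b): one must verify in each of the three cases that an appropriate rescaling of the Chevalley root vectors yields genuine elements of $\g$ (not merely of $\g^c$) without disturbing the property that the triple exponential product projects to the desired Weyl element, and for $(W^{\mathrm{c}})^\theta$ that the $s_\alpha s_{\theta(\alpha)}$ lift to $\sigma$-commuting elements even when $\alpha$ itself is neither real nor imaginary. The $t$-independence clause in Lemma~\ref{lemRaLa}(a) is what makes this flexibility available, and the reality of $r_\alpha\lambda_\alpha$ is exactly the input needed to find the rescaling constants; I expect the argument for $(W^{\mathrm{c}})^\theta$ to require the most care, essentially reproducing in miniature the structure of Lemma~\ref{lem:1}.
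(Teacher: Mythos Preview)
Your treatment of $W^{\re}$ and $(W^{\mathrm{c}})^\theta$ is essentially the paper's approach: rescale root vectors to lie in $\g$ for real roots, and for complex roots use the orthogonality from Lemma~\ref{lem:1} (so that $x_{\pm\alpha}$ commutes with $x_{\pm\theta(\alpha)}$) to build $\sigma$-invariant nilpotent sums whose triple exponential factors as the product of the two individual ones. The paper makes the second case explicit via the elements $x_\alpha+r_\alpha x_{-\theta(\alpha)}$ and $x_{-\alpha}+r_{-\alpha}x_{\theta(\alpha)}$ in $\g$, but your sketch points the same way.

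The genuine gap is in the compact imaginary case. Your claim that $\theta(x_\alpha)=x_\alpha$ forces $x_\alpha\in\fk\subset\g$ conflates the $+1$-eigenspace of $\theta$ on $\g^c$ with that on $\g$. For $\alpha\in\Phi^{\imm}$ we have $\sigma(x_\alpha)=r_\alpha x_{-\alpha}$ (since $-\alpha\circ\theta=-\alpha$), so $\sigma$ carries the $\alpha$-root space to the $-\alpha$-root space and \emph{no} nonzero scalar multiple of $x_\alpha$ lies in $\g$. Worse, the $\sigma$-fixed combinations $x_\alpha+r_\alpha x_{-\alpha}$, $\imath(x_\alpha-r_\alpha x_{-\alpha})$, and $\imath h_\alpha$ span a three-dimensional subalgebra of $\fk$; the paper shows $r_\alpha<0$ (otherwise this subalgebra would be $\sl(2,\R)$, impossible inside the compact $\fk$), so it is isomorphic to $\mathfrak{su}(2)$ and contains no nonzero nilpotent elements at all. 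Hence the route ``find nilpotent $w\in\g$ and apply Lemma~\ref{lemRaLa}(a)'' is blocked here. The paper's fix is to lift the embedding $\mathfrak{su}(2)\hookrightarrow\g$ to a Lie group homomorphism $F\colon\SU(2)\to G$ via simple connectedness, observe that its complexification sends $\left(\begin{smallmatrix}0&1\\-1&0\end{smallmatrix}\right)=\exp(e)\exp(-f)\exp(e)$ to the standard representative of $s_\alpha$, and conclude from connectedness of $\SU(2)$ that this representative lies in $G^\circ$.
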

  \begin{proof}We freely use Lemma \ref{lemRaLa} in this proof; $r_\alpha$ and $\lambda_\alpha$ are defined as in that lemma.
    \begin{iprf}
      \item If $\alpha\in \Phi^{\mathrm{re}}$, then $\sigma(x_\alpha) = r_\alpha x_\alpha$,
        $\sigma(x_{-\alpha}) = r_\alpha^{-1} x_{-\alpha}$, and $\sigma(h_\alpha) = h_\alpha$. If $\sigma(x_\alpha)=-x_\alpha$, then set $\mu = \imath$, otherwise set $\mu = 1+r_\alpha$, so that  $\sigma(\mu x_{\alpha}) = \mu x_\alpha$ in either case. It follows that the element $\exp( \mu \ad x_\alpha) \exp( -\mu^{-1}\ad y_\alpha) \exp( \mu \ad x_\alpha)$
lies in $G^\circ$ and maps to $s_\alpha$ in $W(\Phi)$, so that $s_\alpha\in W(\g,\h)$.

\item Recall that $(W^{\mathrm{c}})^\theta$ is generated by elements of the form $s_\alpha s_{\theta(\alpha)}$, where $\alpha$ and $\theta(\alpha)$ lie in different
orthogonal components of $\Phi^{\mathrm{c}}$, see Lemma \ref{lem:1}; in particular, $x_{\pm \alpha}$ commutes with  $x_{\pm \theta(\alpha)}$. If $s_\alpha s_{\theta(\alpha)}$ is a generator of $(W^{\mathrm{c}})^\theta$, then both  $x_\alpha +r_\alpha x_{-\theta(\alpha)}$ and $x_{-\alpha}+r_{-\alpha}
x_{\theta(\alpha)}$ lie in $\g$ as they are invariant under $\sigma$. Being sums of two commuting nilpotent elements, they are nilpotent, so 
$$\exp( \ad(x_\alpha +r_\alpha x_{-\theta(\alpha)}) ) \exp( -\ad
(x_{-\alpha}+r_{-\alpha}x_{\theta(\alpha)})) \exp( \ad(x_\alpha +r_\alpha
x_{-\theta(\alpha)}) )$$ 
lies in $G^\circ$. Since the $x_{\pm \alpha}$ commute with
$x_{\pm \theta(\alpha)}$, this element is equal to
$$\exp( \ad x_\alpha) \exp(-\ad x_{-\alpha}) \exp(\ad x_\alpha)
\exp( r_\alpha \ad x_{-\theta(\alpha)})\exp( -r_{-\alpha} \ad x_{\theta(\alpha)} )
\exp( r_\alpha \ad x_{-\theta(\alpha)}),$$
and since $r_{-\alpha} = r_{\alpha}^{-1}$, the latter element maps to
$s_\alpha s_{\theta(\alpha)}$ in $W(\Phi)$, that is, $s_\alpha s_{\theta(\alpha)}\in W(\g,\h)$.

\item If $\alpha\in\Phi^{\imm}$, then  $\theta(\alpha)=\alpha$; moreover,  $\sigma(x_\alpha) = r_\alpha x_{-\alpha}$, $\sigma(x_{-\alpha}) = r_\alpha^{-1} x_\alpha$, and $\sigma(h_\alpha) =-h_\alpha$  with $r_\alpha\in\R$. Furthermore, $\theta(x_\alpha) = \lambda_\alpha x_\alpha$ and
because $\theta$ is an involution it follows that $\lambda_\alpha =\pm 1$. In the following suppose that $\lambda_\alpha=1$, so $\alpha$ is compact imaginar. Since $\lambda_{-\alpha} = \lambda_\alpha^{-1}$, it follows that  $-\alpha$ is compact imaginary if $\alpha$ is.

Since $\alpha$ is compact,  $u=\imath h_\alpha$, $x=x_\alpha +r_\alpha x_{-\alpha}$, and $y=\imath (x_\alpha-r_\alpha x_{-\alpha})$ lie in $\fk$; moreover, $[u,x]=2y$, $[u,y]=-2x$, and $[x,y] = -2r_\alpha u$, so they span a subalgebra $\mathfrak{a}$ of $\mathfrak{k}$. If $r_\alpha >0$,
then $\ad_{\mathfrak{a}} x$ has real eigenvalues and $\mathfrak{a}$ is
isomorphic to $\sl(2,\R)$, which is impossible because $\mathfrak{k}$ is
compact. This implies that $r_\alpha <0$, so $\xi = \sqrt{-1/r_\alpha}$ is real. We define  $a=\xi x$, $b=\xi y$, and $c=u$, so that $[a,b]=2c$, $[a,c]=-2b$, and  $[b,c]=2a$. Now we consider the group $\SL(2,\C)$ and its real Lie subgroup
$$\mathrm{SU}(2) = \left\{ \left(\begin{smallmatrix} v & -\bar w \\
  w & \bar v \end{smallmatrix}\right) \mid v,w \in \C, |v|^2+|w|^2=1\right\}.$$
Both are simply connected; for $\SL(2,\C)$ this is well-known, for $\mathrm{SU}(2)$ see  \cite[Proposition 1.15]{hall_lie}. The Lie algebra $\mathfrak{su}(2)$ of $\SU(2)$ has basis elements
$$A=\left(\begin{smallmatrix} 0 & \imath \\ \imath & 0 \end{smallmatrix}\right),~
B=\left(\begin{smallmatrix} 0 & -1 \\ 1 & 0 \end{smallmatrix}\right),~
C=\left(\begin{smallmatrix} \imath & 0 \\ 0 & -\imath \end{smallmatrix}\right)$$
satisfying $[A,B]=2C$, $[A,C]=-2B$, and $[B,C]=2A$. Mapping $(A,B,C)$ to $(a,b,c)$  yields an isomorphism $\phi\colon \mathfrak{su}(2) \to \mathfrak{a}$ which we extend to $\phi^c \colon \sl(2,\C) \to \mathfrak{a}^c$. Setting $h=-iC$, $e=(1/2\imath) (A-\imath B)$, and $f=(1/2\imath)(A+\imath B)$,
we have that $h,e,f$ is an $\sl_2$-triple; moreover, 
\[e = \left(\begin{smallmatrix} 0 & 1 \\ 0 & 0 \end{smallmatrix}\right),\quad  f=\left(\begin{smallmatrix} 0 & 0 \\ 1 & 0 \end{smallmatrix}\right),\quad \phi^c(e) = (\xi/\imath) x_\alpha,\quad \phi^c(f) = (\xi r_\alpha/\imath) x_{-\alpha}.\]
Because of simply-connectedness, $\phi$ and $\phi^c$ lift to unique Lie group
homomorphisms $F \colon \SU(2)\to G$ and $F^c \colon \SL(2,\C)\to G^c$, respectively, with
$F(\exp(z)) = \exp( \ad \phi(z) )$ for $z\in \mathfrak{su}(2)$ and $F^c(\exp(z)) = \exp( \ad \phi^c(z))$ for $z\in \sl(2,\C)$, see \cite[Theorem 5.6]{hall_lie}. Because of uniqueness,  $F$ is the restriction
of $F^c$ to $\mathfrak{su}(2)$.  Now consider \[M = \left(\begin{smallmatrix} 0 & 1 \\ -1 & 0 \end{smallmatrix}\right)\in\SU(2).\] Since $M=\exp(e)\exp(-f)\exp(e)$, we have $F^c(M) = \exp( \tfrac{\xi}{\imath} \ad x_\alpha )\exp( -\tfrac{\imath}{\xi} \ad x_{-\alpha} )\exp( \tfrac{\xi}{\imath} \ad x_\alpha )$, hence $F^c(M)$ is a representative in $G^c$ of $s_\alpha$. By what is said above, $F^c(M)=F(M)\in G$, hence $s_\alpha\in W(\g,\h)$. We note that the $\SL(2)$ argument is motivated by \cite[Section 5]{dc2} and \cite[p.\ 950]{vogan}.
    \end{iprf}
\end{proof} 

Together with the results of Section \ref{secWinWT}, we have
\[ (W^{\mathrm{c}})^\theta \ltimes (W^{\mathrm{re}}\times W^{\imm,\mathrm{c}})\leq W(\g,\h) \leq (W^{\mathrm{c}})^\theta \ltimes (W^{\mathrm{re}}\times W^{\imm});\]it remains to determine $W(\g,\h)\cap W^{\imm}$. Recall from the beginning of Section \ref{secWinWT} that every element in $W(\g,\h)$ has a representative in $G$ that commutes with $\theta$, hence $W(\g,\h)$ preserves the compact imaginary roots $\Phi^{\mathrm{im},\mathrm{c}}$ and so
$W^{\mathrm{im},\mathrm{c}} \subset (W(\g,\h) \cap W^{\mathrm{im}}) \subset W^{\mathrm{im},2}$ where $$W^{\mathrm{im},2} = \{ w\in W^{\mathrm{im}} \mid w(\Phi^{\mathrm{im},\mathrm{c}}) =\Phi^{\mathrm{im},\mathrm{c}}\}.$$ The next sections show that $W^{\imm,2}=Q\ltimes W^{\imm,\mathrm{c}}$ for some elementary abelian 2-group $Q$, and that $W(\g,\h)\cap W^{\imm}=A\ltimes W^{\imm,\mathrm{c}}$ for some subgroup $A\leq Q$. Determining $A$ is the difficult part.

\subsection{Superorthogonal roots} A subset $A=\{\alpha_1,\ldots,\alpha_m\}\subset \Psi$ of a root system is {\em strongly  orthogonal} if $\alpha_i \pm \alpha_j \not \in \Psi \cup \{ 0\}$ for
$i\neq j$; it is {\em superorthogonal} if the only roots in the span of
$A$ are $\pm \alpha_i$ and these are all distinct. Note that superorthogonal implies strongly orthogonal. Furthermore, if $A$ is strongly orthogonal, then $(\alpha_i,\alpha_j) = 0$ for $i\neq j$: indeed, if $(\alpha_i,\alpha_j)<0$, then $\alpha_i+\alpha_j$ is a root, and if $(\alpha_i,\alpha_j)>0$, then  $\alpha_i-\alpha_j$ is a root, see \cite[Lemma 9.4]{humph}. So strongly orthogonal implies orthogonal,
as it should. The next lemma is motivated by \cite[Lemma 3.19]{vogan}.

\begin{lemma}\label{lem:orth}
  Let $\Delta$ be a simple system of $\Psi$. If  $\alpha_1,\ldots,\alpha_m\in
  \Delta$ are orthogonal, then $\{\alpha_1,\ldots,\alpha_m\}$ is superorthogonal. 
\end{lemma}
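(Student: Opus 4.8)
The plan is to show that any root $\beta$ in the span of $\{\alpha_1,\dots,\alpha_m\}$ must in fact be one of $\pm\alpha_i$. Write $\beta = \sum_i c_i\alpha_i$ with $c_i\in\R$. Since the $\alpha_i$ are pairwise orthogonal, the coefficients are immediately recovered as $c_i = (\beta,\alpha_i)/(\alpha_i,\alpha_i)$, so $c_i = \tfrac12\langle\beta,\alpha_i^\vee\rangle\in\tfrac12\Z$. More importantly, $\langle\beta,\alpha_i^\vee\rangle = 2c_i(\alpha_i,\alpha_i)/(\alpha_i,\alpha_i) = 2c_i$ is an integer, so each $c_i\in\tfrac12\Z$; I'll need to rule out half-integers and then show at most one $c_i$ is nonzero.

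First I would bound $\sum_i c_i^2(\alpha_i,\alpha_i) = (\beta,\beta)$ from above. The key input is that $\alpha_1,\dots,\alpha_m$ lie in a \emph{simple} system $\Delta$: any root $\beta$, expanded in the basis $\Delta$, has coefficients all of the same sign, and the coefficient of each $\alpha_i$ in that expansion is exactly $c_i$ (the $\alpha_i$ being part of the basis, the other basis vectors contribute nothing to the $\alpha_i$-component when the $\alpha_i$ are mutually orthogonal — here one uses that orthogonality to $\alpha_i$ of the remaining simple roots is not automatic, so instead I extract $c_i$ via the inner product as above and separately use the sign-coherence of the $\Delta$-expansion of $\beta$). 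Thus all $c_i$ have the same sign; replacing $\beta$ by $-\beta$ if necessary, assume $c_i\ge 0$ for all $i$. Now I compare $\beta$ with the $\alpha_i$ using standard root-string / Cauchy–Schwarz facts: for any two roots $\gamma,\delta$ one has $\langle\gamma,\delta^\vee\rangle\langle\delta,\gamma^\vee\rangle\in\{0,1,2,3,4\}$, with the value $4$ only when $\gamma=\pm\delta$. Apply this with $\gamma=\beta$, $\delta=\alpha_i$: $\langle\beta,\alpha_i^\vee\rangle = 2c_i$ and $\langle\alpha_i,\beta^\vee\rangle = 2c_i(\alpha_i,\alpha_i)/(\beta,\beta)$, so $4c_i^2(\alpha_i,\alpha_i)/(\beta,\beta)\le 4$, i.e. $c_i^2(\alpha_i,\alpha_i)\le(\beta,\beta)$ for every $i$. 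Summing over $i$ and using orthogonality, $(\beta,\beta) = \sum_i c_i^2(\alpha_i,\alpha_i)$, so $\sum_i c_i^2(\alpha_i,\alpha_i)\le \min_i$-type bounds force all but one term to vanish: more precisely, if two indices $i\ne j$ had $c_i,c_j>0$ then $(\beta,\beta)\ge c_i^2(\alpha_i,\alpha_i)+c_j^2(\alpha_j,\alpha_j) > \max\big(c_i^2(\alpha_i,\alpha_i),c_j^2(\alpha_j,\alpha_j)\big)$, contradicting $c_k^2(\alpha_k,\alpha_k)\le(\beta,\beta)$ being an equality-attaining bound only when $\beta=\pm\alpha_k$. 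Hence exactly one $c_i\ne 0$, say $\beta = c_i\alpha_i$, and since $\beta,\alpha_i$ are roots with $\beta$ a scalar multiple of $\alpha_i$ we get $c_i=\pm 1$ (a root system contains no nontrivial multiples of a root other than possibly $\pm2\alpha_i$, excluded in reduced root systems, or handled directly since $2c_i=\langle\beta,\alpha_i^\vee\rangle$ and $\tfrac{2}{c_i}=\langle\alpha_i,\beta^\vee\rangle$ are both integers forcing $c_i=\pm 1$). Finally the $\pm\alpha_i$ are all distinct because the $\alpha_i\in\Delta$ are linearly independent.

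The main obstacle I anticipate is the bookkeeping around extracting the coefficient $c_i$ correctly: membership in a simple system $\Delta$ gives sign-coherence of the $\Delta$-expansion but \emph{not} that the other simple roots are orthogonal to $\alpha_i$, so one cannot naively read off $c_i$ as "the $\alpha_i$-coordinate of $\beta$ in the basis $\Delta$." The clean fix is to work entirely with inner products against the mutually orthogonal $\alpha_i$'s (which do span the relevant subspace), deriving $c_i\in\tfrac12\Z$ from integrality of $\langle\beta,\alpha_i^\vee\rangle$, and to invoke sign-coherence of the $\Delta$-expansion only to conclude that all $c_i$ share a sign. Everything else is the standard finite list of possible angles between two roots, applied $m$ times.
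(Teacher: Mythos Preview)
There is a genuine gap in the step where you conclude that at most one $c_i$ is nonzero. You establish $c_i^2(\alpha_i,\alpha_i)\le(\beta,\beta)$ for each $i$, with equality only when $\beta=\pm\alpha_i$, and $\sum_i c_i^2(\alpha_i,\alpha_i)=(\beta,\beta)$. But these are perfectly compatible with several nonzero $c_i$: if $\beta=\alpha_1+\alpha_2$ with $(\alpha_1,\alpha_1)=(\alpha_2,\alpha_2)$, then $(\beta,\beta)=2(\alpha_1,\alpha_1)$ and each $c_i^2(\alpha_i,\alpha_i)=(\alpha_i,\alpha_i)<(\beta,\beta)$. Strictness merely says $\beta\ne\pm\alpha_i$, which is what you assumed---your displayed chain $(\beta,\beta)\ge c_i^2(\alpha_i,\alpha_i)+c_j^2(\alpha_j,\alpha_j)>\max(\cdots)$ points in the \emph{same} direction as $c_k^2(\alpha_k,\alpha_k)\le(\beta,\beta)$, so no contradiction arises. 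A partial repair uses the dual integers: $\sum_i c_i\langle\alpha_i,\beta^\vee\rangle=2$, and with each summand a nonnegative integer this forces either a single term equal to $2$ (so $\beta=\pm\alpha_i$) or two terms equal to $1$ (so $\beta=\alpha_i+\alpha_j$ with $(\alpha_i,\alpha_i)=(\alpha_j,\alpha_j)$). The second case still has to be excluded, and your angle bound does not do it.

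The paper's proof is shorter and handles precisely this residual case. First, since $\beta$ lies in the span of the $\alpha_i$ and these are part of the basis $\Delta$, the $\Delta$-expansion of $\beta$ already involves only the $\alpha_i$; hence $c_i\in\Z$ from the start (not $\tfrac12\Z$), and your worry about other simple roots failing to be orthogonal to $\alpha_i$ is moot. Writing a positive $\beta\ne\alpha_i$ as a sum of simple roots with every partial sum a root \cite[Corollary~10.2]{humph}, one obtains $\alpha_i+\alpha_j\in\Psi$ for some $i\ne j$. Now the $\alpha_j$-string through $\alpha_i$ has $r-q=\langle\alpha_i,\alpha_j^\vee\rangle=0$, while $\alpha_i-\alpha_j\notin\Psi$ (difference of distinct simple roots) forces $r=0$, hence $q=0$, contradicting $\alpha_i+\alpha_j\in\Psi$. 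This root-string step is exactly the ingredient your argument is missing.
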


\begin{proof}
  Let $\beta$ be a root in the span of the $\alpha_i$. If $\beta\ne\pm \alpha_i$ for all $i$, then we can write $\beta=\alpha_{i_1}+\cdots +\alpha_{i_r}$ such that all partial sums
  $\alpha_{i_1}+\cdots +\alpha_{i_k}$ are roots, see \cite[Corollary 10.2]{humph}. It follows that  $\alpha_i+\alpha_j\in \Psi$ for some $i,j$. If  $\alpha_i-r\alpha_j,\ldots, \alpha_i+q\alpha_j$ is the $\alpha_j$-string
  through $\alpha_i$, then $r-q = \langle \alpha_i,\alpha_j^\vee\rangle=0$, see \cite[p.\ 45]{humph}. Since $\alpha_i-\alpha_j$ is not a root by \cite[Lemma 10.1]{humph}, we have $r=0$; this forces $q=0$, a contradiction to $\alpha_i+\alpha_j\in\Psi$. This shows  that $\beta\in\{\pm\alpha_i\}$ for some $i$.
\end{proof}
Choose a positive system $\Phi^{\mathrm{im}, \mathrm{c}, +}$ in $\Phi^{\mathrm{im}, \mathrm{c}}$ to define $\rho^{\mathrm{im},\mathrm{c}} = \rho( \Phi^{\mathrm{im}, \mathrm{c}} )$ and set \begin{eqnarray}\label{eqQ} Q = \{ w\in W^{\mathrm{im},2} \mid w(\rho^{\mathrm{im},\mathrm{c}}) = \rho^{\mathrm{im},\mathrm{c}}\}.
\end{eqnarray}
  Choose a positive system $\Phi^{\mathrm{im},+}$ of $\Phi^{\mathrm{im}}$ such that $\rho^{\imm,\mathrm{c}}$ is dominant with respect to it, that is, such
  that $(\alpha,\rho^{\mathrm{im},\mathrm{c}}) \geq 0$ for all $\alpha\in \Phi^{\mathrm{im},+}$; this can be done as follows: let $V$ be the real span of
$\Phi^{\mathrm{im}}$; choose a basis of $V$ consisting of elements of
$\Phi^{\mathrm{im}}$; for $u,v\in V$ set $u<v$ if $(u,\rho^{\mathrm{im},\mathrm{c}}) <
(v,\rho^{\mathrm{im},\mathrm{c}})$
or if $(u,\rho^{\mathrm{im},\mathrm{c}})= (v,\rho^{\mathrm{im},\mathrm{c}})$ and the
first coefficient of $v-u$ with respect to the chosen basis of $V$ is positive;
then this is a root order and the corresponding positive system has the required properties. Consider the subsystem
$$\Phi^{\mathrm{im},\rho} = \{ \alpha\in \Phi^{\mathrm{im}} \mid
(\alpha,\rho^{\mathrm{im},\mathrm{c}}) = 0\}.$$By Lemma \ref{lem:rts}, the simple roots in $\Phi^{\mathrm{im},+}$
lying in $\Phi^{\mathrm{im},\rho}$ form a basis of the latter. We denote the set of these simple roots by $B=\{\alpha_1,\ldots,\alpha_m\}$. The next result is due to Knapp \cite[Proposition 3.20]{vogan}.

\begin{prop}\label{propB}Using the previous notation, the following hold.
\begin{ithm}
  \item $B=\{\alpha_1,\ldots,\alpha_m\}\subset\Phi^{\imm,\mathrm{nc}}$ is superorthogonal,
  \item $W^{\mathrm{im},2} = Q\ltimes W^{\mathrm{im},\mathrm{c}}$,
  \item  $Q = W^{\mathrm{im},2}\cap W(\Phi^{\imm,\rho})$. 
\end{ithm}
\end{prop}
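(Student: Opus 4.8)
The plan is to prove the three statements essentially in the order listed, using the positive system $\Phi^{\mathrm{im},+}$ chosen so that $\rho^{\mathrm{im},\mathrm{c}}$ is dominant, and exploiting the orthogonality relations coming from Lemma~\ref{lemForm} and the structure of the compact imaginary subsystem. For part a), the key observation is that the simple roots of $\Phi^{\mathrm{im},+}$ come in two flavours: those lying in $\Phi^{\mathrm{im},\mathrm{c}}$ and those not. Since $\rho^{\mathrm{im},\mathrm{c}}$ is the Weyl vector of $\Phi^{\mathrm{im},\mathrm{c}}$, Lemma~\ref{lemTriv} applied inside $\Phi^{\mathrm{im},\mathrm{c}}$ forces $(\alpha,\rho^{\mathrm{im},\mathrm{c}})>0$ for every $\alpha\in\Phi^{\mathrm{im},\mathrm{c},+}$; hence no compact imaginary simple root can lie in $\Phi^{\mathrm{im},\rho}$, so $B\subset\Phi^{\mathrm{im},\mathrm{nc}}$. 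For superorthogonality, I would check that the elements of $B$ are pairwise orthogonal: if $\alpha_i,\alpha_j\in B$ were non-orthogonal simple roots, then $\alpha_i\pm\alpha_j$ would be roots in $\Phi^{\mathrm{im}}$, one of which (a sum or difference of two $\rho^{\mathrm{im},\mathrm{c}}$-orthogonal roots) is again $\rho^{\mathrm{im},\mathrm{c}}$-orthogonal and hence lies in $\Phi^{\mathrm{im},\rho}$, contradicting the fact that $\alpha_i,\alpha_j$ are simple in that subsystem. With pairwise orthogonality in hand, Lemma~\ref{lem:orth} applied to the simple system of $\Phi^{\mathrm{im},\rho}$ gives superorthogonality directly.

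For part b), I would first argue that $W^{\mathrm{im},\mathrm{c}}\unlhd W^{\mathrm{im},2}$: by definition every $w\in W^{\mathrm{im},2}$ satisfies $w(\Phi^{\mathrm{im},\mathrm{c}})=\Phi^{\mathrm{im},\mathrm{c}}$, so conjugation by $w$ sends $s_\alpha$ to $s_{w(\alpha)}$ with $w(\alpha)\in\Phi^{\mathrm{im},\mathrm{c}}$, whence $wW^{\mathrm{im},\mathrm{c}}w^{-1}=W^{\mathrm{im},\mathrm{c}}$. Next, given any $w\in W^{\mathrm{im},2}$, the set $w^{-1}(\Phi^{\mathrm{im},\mathrm{c},+})$ is a positive system of $\Phi^{\mathrm{im},\mathrm{c}}$, so there is a unique $u\in W^{\mathrm{im},\mathrm{c}}$ with $w^{-1}(\Phi^{\mathrm{im},\mathrm{c},+})=u^{-1}(\Phi^{\mathrm{im},\mathrm{c},+})$, i.e.\ $wu^{-1}$ stabilises $\Phi^{\mathrm{im},\mathrm{c},+}$ and therefore fixes $\rho^{\mathrm{im},\mathrm{c}}=\frac12\sum_{\alpha\in\Phi^{\mathrm{im},\mathrm{c},+}}\alpha$; thus $wu^{-1}\in Q$ and $W^{\mathrm{im},2}=QW^{\mathrm{im},\mathrm{c}}$. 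For the intersection being trivial: a nontrivial element of $W^{\mathrm{im},\mathrm{c}}$ is a nontrivial element of the Weyl group of $\Phi^{\mathrm{im},\mathrm{c}}$ and so cannot fix the regular (in $\Phi^{\mathrm{im},\mathrm{c}}$) vector $\rho^{\mathrm{im},\mathrm{c}}$, hence lies outside $Q$; combined with normality of $W^{\mathrm{im},\mathrm{c}}$ this gives the semidirect product decomposition $W^{\mathrm{im},2}=Q\ltimes W^{\mathrm{im},\mathrm{c}}$.

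For part c), the inclusion $Q\subset W^{\mathrm{im},2}$ is immediate from the definition \eqref{eqQ}, so the content is $Q\subset W(\Phi^{\mathrm{im},\rho})$ and the reverse inclusion $W^{\mathrm{im},2}\cap W(\Phi^{\mathrm{im},\rho})\subset Q$. The latter is easy: any $\alpha\in\Phi^{\mathrm{im},\rho}$ satisfies $s_\alpha(\rho^{\mathrm{im},\mathrm{c}})=\rho^{\mathrm{im},\mathrm{c}}$ since $(\alpha,\rho^{\mathrm{im},\mathrm{c}})=0$, so $W(\Phi^{\mathrm{im},\rho})$ fixes $\rho^{\mathrm{im},\mathrm{c}}$ and hence $W^{\mathrm{im},2}\cap W(\Phi^{\mathrm{im},\rho})\subset Q$. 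For $Q\subset W(\Phi^{\mathrm{im},\rho})$, the point is that an element $w\in Q$ fixes $\rho^{\mathrm{im},\mathrm{c}}$, so by Lemma~\ref{lem:2} (with the single vector $\lambda_1=\rho^{\mathrm{im},\mathrm{c}}$ inside $V=$ real span of $\Phi^{\mathrm{im}}$) it lies in the Weyl group of exactly $\{\alpha\in\Phi^{\mathrm{im}}\mid(\alpha,\rho^{\mathrm{im},\mathrm{c}})=0\}=\Phi^{\mathrm{im},\rho}$; note one must also use that $w\in W^{\mathrm{im},2}\subset W^{\mathrm{im}}=W(\Phi^{\mathrm{im}})$ so that Lemma~\ref{lem:2} applies. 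I expect the main obstacle to be part a): checking pairwise orthogonality of the elements of $B$ cleanly requires care in handling the root strings $\alpha_i\pm\alpha_j$ and making sure the resulting root that is $\rho^{\mathrm{im},\mathrm{c}}$-orthogonal genuinely contradicts simplicity in $\Phi^{\mathrm{im},\rho}$; once orthogonality is secured, Lemma~\ref{lem:orth} does the rest and parts b) and c) are routine applications of standard Weyl-group positive-system arguments together with Lemma~\ref{lem:2}.
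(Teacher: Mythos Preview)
Your arguments for parts b) and c) are correct and follow the paper's proof essentially verbatim: normality of $W^{\mathrm{im},\mathrm{c}}$ via conjugation of reflections, the positive-system argument to write $w=q\mu$ with $q\in Q$, triviality of the intersection since $\rho^{\mathrm{im},\mathrm{c}}$ is regular for $\Phi^{\mathrm{im},\mathrm{c}}$, and then Lemma~\ref{lem:2} for part c). Likewise, your deduction $B\subset\Phi^{\mathrm{im},\mathrm{nc}}$ from Lemma~\ref{lemTriv} is exactly what the paper does (in fact the paper records the stronger fact $\Phi^{\mathrm{im},\rho}\subset\Phi^{\mathrm{im},\mathrm{nc}}$, which you implicitly have as well).

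There is, however, a genuine gap in your orthogonality argument in part a). You claim that if $\alpha_i,\alpha_j\in B$ are non-orthogonal, then one of $\alpha_i\pm\alpha_j$ is a root in $\Phi^{\mathrm{im},\rho}$, ``contradicting the fact that $\alpha_i,\alpha_j$ are simple in that subsystem''. But since $\alpha_i,\alpha_j$ lie in the simple system of $\Phi^{\mathrm{im},+}$ (by Lemma~\ref{lem:rts}), one has $(\alpha_i,\alpha_j)\le 0$ and $\alpha_i-\alpha_j\notin\Phi$; so the only candidate is $\alpha_i+\alpha_j$, and a \emph{sum} of two simple roots being a root does not contradict simplicity at all. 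Thus your proposed contradiction never fires.

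The missing idea, and the one the paper uses, is a \emph{parity} argument for compact versus noncompact imaginary roots. Since $\alpha_i,\alpha_j\in\Phi^{\mathrm{im},\mathrm{nc}}$ we have $\theta(x_{\alpha_i})=-x_{\alpha_i}$ and $\theta(x_{\alpha_j})=-x_{\alpha_j}$, hence $\theta([x_{\alpha_i},x_{\alpha_j}])=[x_{\alpha_i},x_{\alpha_j}]$, so if $\alpha_i+\alpha_j$ is a root it is \emph{compact} imaginary. On the other hand, $\alpha_i+\alpha_j\in\Phi^{\mathrm{im},\rho}\subset\Phi^{\mathrm{im},\mathrm{nc}}$ by the inclusion you already proved. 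This contradiction forces $(\alpha_i,\alpha_j)=0$, after which Lemma~\ref{lem:orth} gives superorthogonality. Once you insert this step, your proof is complete and matches the paper.
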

   
\begin{proof}
  \begin{iprf}
    \item Lemma \ref{lemTriv} implies that  $\Phi^{\mathrm{im},\rho}\subset \Phi^{\mathrm{im},\mathrm{nc}}$. Since the $\alpha_i$ lie in a basis of $\Phi^{\mathrm{im},+}$, it follows that
  $(\alpha_i,\alpha_j)\leq 0$ for all $i,j$, see \cite[Lemma 10.1]{humph}. Suppose $(\alpha_i,\alpha_j)<0$ for  some $i,j$, so that $\alpha_i+\alpha_j\in \Phi^{\mathrm{im}}$ by  \cite[Corollary~9.4]{humph}. It is obvious that $\alpha_i+\alpha_j\in\Phi^{\imm,\rho}$, so that   $\alpha_i+\alpha_j \in \Phi^{\mathrm{im},\mathrm{nc}}$. But since $\alpha_i$ and   $\alpha_j$ also lie in  $\Phi^{\mathrm{im},\mathrm{nc}}$, a small computation shows that  $\alpha_i+\alpha_j\in  \Phi^{\mathrm{im},\mathrm{c}}$, a contradiction. We conclude that $(\alpha_i,\alpha_j)=0$ for all $i,j$, and now Lemma~\ref{lem:orth} proves that $B$ is superorthogonal. 
\item Note that $Q=\{ w\in W^{\mathrm{im},2} \mid w(\Phi^{\mathrm{im}, c, +}) = \Phi^{\mathrm{im}, c, +} \}$. 
 If $w\in W^{\mathrm{im},2}$, then $w^{-1}(\Phi^{\mathrm{im}, c, +})$ is a positive
  system for $\Phi^{\mathrm{im}, \mathrm{c}}$, so $w^{-1}(\Phi^{\mathrm{im}, c, +}) = \mu^{-1}(\Phi^{\mathrm{im}, c, +})$ for some $\mu\in W^{\mathrm{im},\mathrm{c}}$. This implies   $w\mu^{-1} =q\in Q$, and so $w=q\mu\in QW^{\mathrm{im},\mathrm{c}}$. By \cite[Theorem~10.3(e)]{humph}, any element of
  $W^{\mathrm{im},\mathrm{c}}$ that maps $\Phi^{\mathrm{im}, c, +}$ to itself is the identity, hence $Q\cap W^{\mathrm{im},\mathrm{c}} = 1$. If  $\alpha\in \Phi^{\mathrm{im},\mathrm{c}}$ and $w\in W^{\mathrm{im},2}$, then
  $ws_\alpha w^{-1} = s_{w(\alpha)}$ with $w(\alpha)\in \Phi^{\mathrm{im}, \mathrm{c}}$, so  $W^{\mathrm{im},\mathrm{c}}\unlhd W^{\mathrm{im},2}$.
\item  Lemma \ref{lem:2} shows 
 $W(\Phi^{\imm,\rho}) = \{w \in W^\imm \mid w(\rho^{\imm,\mathrm{c}}) = \rho^{\imm,\mathrm{c}}\}$, so $Q=W^{\imm,2}\cap W(\Phi^{\imm,\rho})$. 
  \end{iprf}
\end{proof}

\subsection{Constructing the last subgroup} Recall that $W^{\imm,\mathrm{c}}\leq W(\g,\h)\leq W^{\imm,2}=Q\ltimes W^{\imm,\mathrm{c}}$, so it remains to determine the subgroup $A\leq Q$ such that $W(\g,\h)\cap W^{\mathrm{im}}=A\ltimes W^{\imm,\mathrm{c}}$. Together with the previous results, this then leads to the decomposition of $W(\g,\h)$ as in \eqref{eqWDEC}. Recall that $Q=W(\Phi^{\imm,\rho})\cap W^{\imm,2}$, so \[A=Q\cap W(\g,\h)\leq W(\Phi^{\imm,\rho})\cap W(\g,\h).\] The next proposition describes $W(\Phi^{\imm,\rho})\cap W(\g,\h)$; intersecting with $Q$ gives $A$.

Let $P$ be the root lattice defined by the root system $\Phi$, that is, $P$ is the set of all integral linear combinations of the elements in $\Phi$. Since $\theta$ acts on $\Phi$, it also acts on $P$; let  $P^\theta$ be the sublattice of all $\mu\in P$ with $\mu=\theta(\mu)=\mu\circ \theta$.  Recall that $\langle \alpha,\beta^\vee\rangle=2(\alpha,\beta)/(\beta,\beta)$ for  $\alpha,\beta\in P$. The next result gives an explicit construction of $W(\Phi^{\imm,\rho})\cap W(\g,\h)$; this proposition is similar to \cite[Corollary~6.10]{dc2}, cf.\ \cite[Section 13]{adams}, but here we adapt it to our situation and give an independent proof.

\begin{prop}\label{propWI}
  Let
  $B=\{\alpha_1,\ldots,\alpha_m\}$ be the superorthogonal set in Proposition \ref{propB}. Then
\[ W(\Phi^{\imm,\rho})\cap W(\g,\h)=\{s_{\alpha_1}^{\epsilon_1} \cdots  s_{\alpha_m}^{\epsilon_m}\mid (\epsilon_1,\ldots,\epsilon_m)\in E\}\]
where $E = \{ (\epsilon_1,\ldots,\epsilon_m)\in \{0,1\}^m \mid
  \sum\nolimits_{i=1}^m \epsilon_i \langle \mu, \alpha_i^\vee\rangle = 0 \bmod 2
  \text{ for all }\mu\in P^\theta\}.$
\end{prop}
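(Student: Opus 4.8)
The plan is to make $W(\Phi^{\imm,\rho})$ completely explicit, write down representatives of its elements, and decide membership in $W(\g,\h)$ by a computation in $Z_{G^c}(\h^c)$. Since $B=\{\alpha_1,\dots,\alpha_m\}$ is superorthogonal (Proposition~\ref{propB}(a)), the only roots in $\Phi^{\imm,\rho}$ are $\pm\alpha_1,\dots,\pm\alpha_m$, the reflections $s_{\alpha_i}$ pairwise commute, and $W(\Phi^{\imm,\rho})=\langle s_{\alpha_1},\dots,s_{\alpha_m}\rangle$ is elementary abelian of order $2^m$, with elements $w_\epsilon:=s_{\alpha_1}^{\epsilon_1}\cdots s_{\alpha_m}^{\epsilon_m}$ for $\epsilon\in\{0,1\}^m$. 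So it suffices to show $w_\epsilon\in W(\g,\h)$ if and only if $\epsilon\in E$.

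Fix $i$. As $\alpha_i\in\Phi^{\imm,\mathrm{nc}}$, Lemma~\ref{lemRaLa} gives $\theta(x_{\alpha_i})=-x_{\alpha_i}$ and $\sigma(x_{\alpha_i})=r_i x_{-\alpha_i}$ with $r_i\in\R$, and in fact $r_i>0$ because $\tau=\sigma\theta$ is a compact structure, so $-\kappa(x_{\alpha_i},\tau(x_{\alpha_i}))>0$ while $\tau(x_{\alpha_i})=-r_i x_{-\alpha_i}$. Rescaling (equivalently, choosing the scalar in Lemma~\ref{lemRaLa}(a)), put $n_i:=\exp(t_i\ad x_{\alpha_i})\exp(-t_i^{-1}\ad x_{-\alpha_i})\exp(t_i\ad x_{\alpha_i})\in N_{G^c}(\h^c)$ with $|t_i|^2 r_i=1$; this represents $s_{\alpha_i}$. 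Since the $\alpha_i$ are mutually orthogonal the $x_{\pm\alpha_i}$ pairwise commute, so the $n_i$ pairwise commute and $n_\epsilon:=\prod_{i\colon\epsilon_i=1}n_i$ represents $w_\epsilon$. Using $\sigma\exp(c\,\ad y)\sigma^{-1}=\exp(\bar c\,\ad\sigma(y))$ and $\theta\exp(c\,\ad y)\theta^{-1}=\exp(c\,\ad\theta(y))$ together with the $\mathrm{SL}_2$-relation that $\exp(b\,\ad x_{-\beta})\exp(-b^{-1}\ad x_{\beta})\exp(b\,\ad x_{-\beta})$ equals $n_\beta$ up to a factor in $Z_{G^c}(\h^c)$, one obtains the key identity
\[\sigma n_\epsilon\sigma^{-1}=\theta n_\epsilon\theta^{-1}=n_\epsilon z_\epsilon,\qquad z_\epsilon:=\prod\nolimits_{i\colon\epsilon_i=1}\alpha_i^\vee(-1),\]
where $\alpha_i^\vee(-1)$ is the value at $-1$ of the cocharacter $\alpha_i^\vee$ of $Z_{G^c}(\h^c)$. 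Consequently $n_\epsilon$ commutes with $\tau$ (hence lies in the compact real form $U=(G^c)^\tau$), $z_\epsilon^2=1$, and $\mu(z_\epsilon)=(-1)^{\sum_{i\colon\epsilon_i=1}\langle\mu,\alpha_i^\vee\rangle}$ for every character $\mu$.

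For the ``only if'' direction, if $w_\epsilon\in W(\g,\h)$ then, by the facts recalled at the start of Section~\ref{secWinWT} ($G^\circ$ is reductive, so every element of $W(\g,\h)$ has a representative in $G^\circ$ commuting with $\theta$), there is $t\in Z_{G^c}(\h^c)$ with $n_\epsilon t\in G^\circ$ and $\theta(n_\epsilon t)\theta^{-1}=n_\epsilon t$; substituting the key identity forces $z_\epsilon=t\,(\theta t\theta^{-1})^{-1}$, and evaluating $\mu\in P^\theta$ (so $\mu(\theta t\theta^{-1})=\mu(t)$) yields $\mu(z_\epsilon)=1$, i.e.\ $\epsilon\in E$. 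For the ``if'' direction, given $\epsilon\in E$ one first solves $z_\epsilon=t\,(\theta t\theta^{-1})^{-1}$ with $t$ in the compact torus $Z_{G^c}(\h^c)\cap U$: on character groups the map $t\mapsto t\,(\theta t\theta^{-1})^{-1}$ is $1-\theta$, whose kernel is $P^\theta$, so its image is precisely the subtorus annihilated by $P^\theta$, and $\epsilon\in E$ is exactly the statement $z_\epsilon\in\operatorname{im}$. Then $n_\epsilon t$ commutes with $\theta$ and lies in $U$, hence in $U\cap G$; it remains to check it lies in $G^\circ$, i.e.\ in the maximal compact $K=G^\circ\cap U$ rather than in a strictly larger component of $U\cap G$. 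This is the delicate point: it is handled by lifting $x_{\pm\alpha_i}$ and $t$ to the simply-connected cover $\wt{G^c}$, where the analogous element visibly lies in the connected fixed-point group of the Cartan involution, and then pushing down; tracking this is what pins the condition on $z_\epsilon$ to the lattice $P^\theta$ in $E$. Once this is done, $w_\epsilon\in W(\g,\h)$.

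The step I expect to be the main obstacle is the last one: verifying that the representative constructed in the ``if'' direction lies in the identity component $G^\circ$, not merely in $G$ — equivalently, correctly accounting for $\pi_0(G^\circ)$, which is exactly the isogeny-dependent part flagged in the introduction. The combinatorial parametrisation, the key identity of the second paragraph, and the ``only if'' direction are essentially bookkeeping, and the solvability of $z_\epsilon=t\,(\theta t\theta^{-1})^{-1}$ is a routine subtorus argument; it is the passage through $\wt{G^c}$ to control the component group that carries the real content.
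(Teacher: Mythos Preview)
Your outline is on the right track and runs parallel to the paper's argument: both reduce the question to whether the element $z_\epsilon=\prod_i\alpha_i^\vee(-1)^{\epsilon_i}$ lies in a specific subtorus, and both ultimately pass through the simply connected cover. The difference is \emph{where} the lift happens, and this is not cosmetic.

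The paper works in $\widetilde{G}^c$ from the outset. It invokes two external facts you do not: that $\widetilde{G}^c(\R)$ is connected when $\widetilde{G}^c$ is simply connected (so $\pi(\widetilde{G}^c(\R))=G^\circ$), and the Adams--Ta\"ibi identification $W(\g,\h)\cong N_{\widetilde K^c}(\h^c)/Z_{\widetilde K^c}(\h^c)$ with $\widetilde K^c=(\widetilde{G}^c)^\theta$. Together these mean that membership in $W(\g,\h)$ is tested entirely inside the complex group $\widetilde K^c$, with no component-group bookkeeping at all. The paper then writes $\widetilde H^c=\widetilde H^c_+\widetilde H^c_-$ along $\theta$-eigenvalues, observes that $g$ represents an element of $W(\g,\h)$ iff $g^{-2}\in(\widetilde H^c_-)^2$, and proves $(\widetilde H^c_-)^2=(\widetilde H^c_-)^\circ$ via the lattice quotient $P^\theta/(1+\theta)P$; the parity condition on $P^\theta$ drops out of $\mu(h_{\alpha_i}(-1))=(-1)^{\langle\mu,\alpha_i^\vee\rangle}$.

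Your ``only if'' direction and your subtorus analysis for the ``if'' direction are correct and match the paper's mechanism. But your acknowledged gap is the entire content of the proposition: solving $z_\epsilon=t(\theta t\theta^{-1})^{-1}$ in the adjoint torus gets you into $G$, not $G^\circ$, and your one-line gesture toward $\widetilde{G}^c$ is not a proof. When you actually carry out that lift you will need precisely the connectedness of $\widetilde{G}^c(\R)$ and something equivalent to the Adams--Ta\"ibi isomorphism --- at which point you have reconstructed the paper's argument, only after an unnecessary detour through the adjoint group. The paper's choice to start in $\widetilde{G}^c$ is exactly what converts your sketch into a proof.
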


\begin{proof}The proof uses some standard results for semisimple algebraic groups; a classical reference is Steinberg's lecture notes \cite{stein}. Here we give detailed references to \cite{gra16}.  

First, we use the description in \cite[Section 5.2.3 and p.\ 182]{gra16} to construct the simply connected algebraic group $\widetilde{G}^c$  over $\C$ with
  Lie algebra isomorphic to $\g^c$: let $V^c$ be
  a $\g^c$-module such that the weights of $V^c$ generate the entire weight
  lattice $P$; let $\phi \colon \g^c \to \gl(V^c)$ be the corresponding faithful representation.
  Then $\widetilde{G}^c$ is the subgroup of $\GL(V^c)$ generated
  by all $\exp \phi(x) $ with $x\in \g^c$ nilpotent, see \cite[Section~5.2.3 and p.~172]{gra16}, with Lie algebra $\phi(\g^c)$. By \cite[Corollary 5.2.32]{gra16}, there is a surjective morphism of algebraic groups $\pi \colon \widetilde{G}^c \to  G^c$ with  $\ker\pi\leq Z(\widetilde{G}^c)$. If $g=\exp\phi(y)$ with $x\in\g^c$ and nilpotent $y\in\g^c$, then   $$g\phi(x)g^{-1}=(\exp \phi(y)) \phi(x) (\exp(-\phi(y)))= (\exp \ad \phi(y)) (\phi(x)),$$see \cite[Lemma 2.3.1]{gra16}; since $\phi^{-1}((\exp \ad \phi(y)) (\phi(x)))=(\exp \ad y) (x)$, we have  $\pi(g)= \exp \ad y$. In conclusion, \[\pi(g)(x) = \phi^{-1} (g\phi(x)g^{-1})=(\exp\ad y)(x).\]
By  \cite[Theorem 35.3(c)]{hum}, the group of real points $\widetilde{G}=\widetilde{G}^c(\R)$ is connected (in the
  Euclidean topology),  and $\pi(\widetilde{G})=G^\circ$ follows from \cite[7.4]{borelf}. Since $\widetilde{G^c}$ is simply connected, $\theta\colon \g^c\to \g^c$ lifts to a unique involution of $\widetilde{G}^c$ with
  differential equal to $\theta$, see  \cite[Theorem~5.6]{hall_lie}; by abuse of notation, we denote this involution by $\theta$. That theorem also shows   $\theta( \exp \phi(x) ) = \exp \phi(\theta(x))$ for all nilpotent $x\in \g^c$. To simplify notation, in the following we identify $\g^c$ with $\phi(\g^c)$, that is, we assume that $\phi$ is the identity. Now  define\[ N_{\widetilde{G}}(\h) = \{ g\in \widetilde{G} \mid \pi(g)(\h)=\h\}\quad\text{and}\quad Z_{\widetilde{G}}(\h) = \{ g\in \widetilde{G} \mid \pi(g)(x)=x \text{ for all } x \in \h\}.\]
 Since $\ker\pi\leq Z(\widetilde{G}^c)$ and $g=\exp y$ with $y\in\g^c$ nilpotent acts as $\pi(g)=\exp\ad y$ on $\g^c$, we have that $\pi$ maps $N_{\widetilde{G}}(\h)$ and $Z_{\widetilde{G}}(\h)$ onto $N_{G^\circ}(\h)$ and $Z_{G^\circ}(\h)$, respectively. Since $W(\g,\h)=N_{G^\circ}(\h)/Z_{G^\circ}(\h)$ by definition, we deduce $N_{\widetilde{G}}(\h)/Z_{\widetilde{G}}(\h) \cong W(\g,\h)$, so every element in $W(\g,\h)$ has a representative in $\widetilde{G}$. Recall that $\widetilde H^c=Z_{\widetilde{G}^c}(\h^c)$ is the connected torus in $\widetilde{G}^c$ with Lie algebra $\h^c$; we now recall some well-known facts for $$\widetilde K^c = \{ g\in \widetilde{G}^c \mid \theta(g)=g\}.$$
\begin{ithm}
\item[(1)]  $Z_{\widetilde K^c}(\h^c)=\widetilde H^c\cap \widetilde K^c$: Since $\widetilde H^c$ is connected and $\widetilde H^c\leq \widetilde{G}^c$ are matrix groups, \cite[Lemma 4.7.3]{gra16} yields $Z_{\widetilde{G}^c}(\widetilde H^c)=Z_{\widetilde{G}^c}(\h^c)$, hence $Z_{\widetilde K^c}(\h^c)=Z_{\widetilde K^c}(\widetilde H^c)=\widetilde K^c\cap \widetilde H^c$.
\item[(2)] $Z_{\widetilde{G}}(\h)=\widetilde H^c(\R)$: Since $\widetilde H^c=Z_{\widetilde{G}^c}(\widetilde H^c)=Z_{\widetilde{G}^c}(\h^c)$ and $\widetilde{G}=\widetilde{G}^c(\R)$, we have
\[Z_{\widetilde{G}}(\h^c)=\widetilde{G}\cap Z_{\widetilde{G}^c}(\h^c)=\widetilde{G}\cap Z_{\widetilde{G}^c}(\widetilde H^c)=Z_{\widetilde{G}}(\widetilde H^c)=\widetilde{G}^c(\R)\cap \widetilde H^c=\widetilde H^c(\R).\]Since $\widetilde H^c(\R)\subset \widetilde H^c$ is dense, $Z_{\widetilde{G}}(\widetilde H^c(\R))= Z_{\widetilde{G}}(\widetilde H^c)$. Now  $Z_{\widetilde{G}}(\h)=Z_{\widetilde{G}}(\h^c)$ proves the claim.
\item[(3)] $N_{U^c}(\h^c)=N_{U^c}(\widetilde H^c)$ for any subgroup $U^c\leq \widetilde{G}^c$: Recall that $\widetilde{G}^c$ and $\g^c$ are matrix structures. Let $g\in \widetilde{G}^c$ and consider the regular map $\Ad(g)\colon\widetilde{G}^c\to\widetilde{G}^c$, $x\mapsto gxg^{-1}$. If $g$ normalises $\widetilde H^c$, then the differential of $\Ad(g)$ is $\g^c\to \g^c$, $x\mapsto gxg^{-1}$, see \cite[p.\ 110]{gra16}, and maps $\h^c$ to $\h^c$, hence $g$ normalises $\h^c$. Conversely, suppose $g\h^cg^{-1}=\h^c$. Note that $H_0^c=\Ad(g)(\widetilde H^c)$ is an algebraic subgroup and the differential of $\Ad(g)$ maps $\h^c$ to the Lie algebra $\h_0^c$ of $H_0^c$. By assumption, $g\h^cg^{-1}=\h^c$, which implies $\h_0^c=\h^c$, hence $H_0^c=\widetilde H^c$ by \cite[Theorem 4.3.3]{gra16}.
\item[(4)]  $N_{\widetilde{G}}(\widetilde H^c(\R))=N_{\widetilde{G}}(\h)$: Using (3), we see that  $g\in\widetilde{G}$ normalises $\widetilde H^c(\R)$ if and only if it normalises $\widetilde H^c$, if and only if it normalises $\h^c$, if and only if it normalises $\h$.
\end{ithm} 
By \cite[Proposition 6.3.2]{adams2}, there is an isomorphism $N_{\widetilde{G}}(\widetilde H^c(\R))/\widetilde H^c(\R)\cong N_{\widetilde K^c}(\widetilde H^c)/(\widetilde H^c\cap \widetilde K^c)$. Together with the above observations, this implies that 
\begin{eqnarray*}W(\g,\h)\cong N_{\widetilde{G}}(\h)/Z_{\widetilde{G}}(\h)\cong N_{\widetilde K^c}(\h^c)/Z_{\widetilde K^c}(\h^c).
\end{eqnarray*} 
Recall that $\pi$ maps $\widetilde G$ onto $G^\circ$. Since $\h^c$ is $\theta$-stable, $\theta$ is an involution on $\widetilde H^c$, and we can decompose $\widetilde H^c=\widetilde H^c_+ \widetilde H^c_-$,
  where $\widetilde H^c_\pm= \{ h\in \widetilde H^c \mid \theta(h) = h^{\pm 1}\}$. For each $\alpha_i$ in the superorthogonal set $B$ define \[g_i = \exp( x_{\alpha_i} ) \exp( -x_{-\alpha_i} )\exp( x_{\alpha_i} ),\]
  so that  $g_i\in \widetilde{G}^c$ maps to $s_{\alpha_i}$ in the full Weyl group $W(\Phi)$, see Lemma \ref{lemRaLa}.  Since each $\alpha_i$ is noncompact imaginary, $\theta(x_{\alpha_i})=-x_{\alpha_i}$. Since $\theta( \exp x ) = \exp \theta(x)$ for nilpotent $x\in \g^c$, this shows $\theta(g_i)=g_i^{-1}$; moreover, the $g_i$ commute because of the superorthogonality. 

Since $W(\Phi)=N_{\widetilde{G}^c}(\h^c)/Z_{\widetilde{G}^c}(\h^c)$, the set of elements in $N_{\widetilde{G}^c}(\h^c)$ that map to a product  $s_{\alpha_{i_1}}\cdots s_{\alpha_{i_r}}$ in $W(\Phi)$ is exactly $gZ_{\widetilde{G}^c}(\h^c)$ where  $g=g_{i_1}\cdots g_{i_r}\in \widetilde{G}^c$. Recall that $W(\g,\h)$ is a subgroup of $W(\Phi)$ and we have $W(\g,\h)=N_{\widetilde K^c}(\h^c)/Z_{\widetilde K^c}(\h^c)$. This shows that the image of the above $g$ lies in $W(\g,\h)$ if and only if there exists $h\in Z_{\widetilde G^c}(\h^c)$ such that $gh\in \widetilde K^c$, that is, $\theta(gh)=gh$. Writing $h=h_+h_-$ with $h_\pm \in \widetilde H^c_\pm$, we have that  $\theta(gh) = \theta(g)h_+h_-^{-1}$, and therefore  $\theta(gh) = gh$ if and only if
  $g^{-1}\theta(g)=  h_-^{2}$. By what is said above,   $\theta(g)=g^{-1}$, so $g$ maps to an element of $W(\g,\h)$ if and only if
  there is $h_-\in \widetilde H_-^c$ with $g^{-2} = h_-^{2}$. Thus, we  study the diagonalisable group $\widetilde H^c_-$. 

Recall that $P$ is the weight lattice
  of $\Phi$. Since $\widetilde{G}^c$ is simply connected, \cite[Proposition 5.3.12]{gra16} shows that we can identify the character group of $\widetilde H^c$ with $P$.  More precisely, let $\gamma_1,\ldots,  \gamma_m$ be the simple roots of $\Phi$. Following \cite[pp.\ 162 \& 172]{gra16}, for a root $\alpha\in\Phi$ and $t\in \C$ write $x_\alpha(t) = \exp( tx_\alpha )$, and for $t\in \C^*$ define
  $w_\alpha(t) = x_\alpha(t)x_{-\alpha}(-t^{-1})x_\alpha(t)$ and $h_\alpha(t) = w_\alpha(t)w_\alpha(1)^{-1}.$ By \cite[Example~5.2.33]{gra16}, the elements of $\widetilde H^c$ can uniquely be written as
  $h_{\gamma_1}(t_1)\cdots h_{\gamma_m}(t_m)$, and  \cite[Lemma~5.2.17]{gra16} shows that   $\lambda(h_\alpha(t)) = t^{\langle \lambda,\alpha^\vee\rangle}$ for all $\lambda\in P$. Since $\widetilde H_-^c$ is an algebraic subgroup of $\widetilde H^c$,  it is the intersection of
  the kernels of the characters in a sublattice of $P$, see \cite[Proposition~3.9.5]{gra16}. This sublattice is $(1+\theta)P = \{ \lambda+  \theta(\lambda) \mid \lambda\in P\}$: if $h\in \widetilde H_-^c$, then  $(\lambda+\theta(\lambda))(h)  = \lambda(h)\lambda(h^{-1})=1$ for all $\lambda\in P$;  if $h\in \widetilde H^c$ satisfies $\mu(h)=1$ for all $\mu\in (1+\theta)P$, then $\lambda(h\theta(h))=1$ for all   $\lambda\in P$; since $P$ contains all roots, this implies $h\theta(h)=1$, so $h\in \widetilde H_-^c$. Thus $\widetilde H_-^c$ is defined by $(1+\theta)P$. We now show that \[P^\theta/(1+\theta)P\cong \widetilde H_-^c/(\widetilde H_-^c)^\circ,\] see also \cite[Proposition 12.3]{chev}. Clearly,  $(1+\theta)P\leq P^\theta$. Note that both  lattices have
  the same rank equal to the dimension $e$ of the $1$-eigenspace of $\theta$  on the $\mathbb{Q}$-space spanned by $P$. It is obvious that  $P^\theta$ is pure, meaning that $P/P^\theta$ is torsion free. Together, $P^\theta$ is the
 the smallest sublattice containing $(1+\theta)P$ that is pure, that is, $P^\theta$ is the purification of $(1+\theta)P$. Since $P^\theta$ is pure,  \cite[Proposition~3.9.6]{gra16} shows that it defines a connected algebraic subgroup of $\widetilde H_-^c$, namely an $e$-dimensional torus $E$. Since $P^\theta$ and $(1+\theta)P$ have the same rank, it follows that $(1+\theta)P$ defines a group isomorphic to $F\times E$, where $F$ is finite. Thus,  $P^\theta$ defines $E=(\widetilde H_-^c)^\circ$. Moreover, it follows from \cite[Remark 3.9.8]{gra16} that $F$ is isomorphic to the torsion subgroup of $P/(1+\theta)P$; so $P^\theta/(1+\theta)P\cong F \cong  \widetilde H^c_-/(\widetilde H^c_-)^\circ$.

 Note that if  $\mu\in P^\theta$, then $\mu+\mu\in (1+\theta)P$, which shows that $P^\theta/(1+\theta)P$ is an elementary abelian 2-group. We conclude that $(\widetilde H_-^c)^2 = (\widetilde H^c_-)^\circ$. Putting things together, we see that $g=g_{i_1}\cdots g_{i_r}\in \widetilde{G}^c$ as above maps to an element of $W(\g,\h)$ if and only if $g^{-2}$ lies in $(\widetilde H_-^c)^2=(\widetilde H_-^c)^\circ$, if and only if $\mu(g^{-2})=1$ for all $\mu\in P^\theta$. From the definitions of $g_i$ and $h_{\alpha_i}(t)$, it follows that $g_i^{-2} = h_{\alpha_i}(-1)$. Furthermore, as mentioned above, for $\mu\in P$ we have
  $\mu(h_{\alpha_i}(-1)) = (-1)^{\langle \mu,\alpha_i^\vee\rangle}$. It follows that $s_{\alpha_{i_1}}\cdots s_{\alpha_{i_r}}$ lies in $W(\g,\h)$ if and only if $\sum\nolimits_{j=1}^r \langle \mu, \alpha_{i_j}^\vee \rangle $ is even for all $\mu\in P^\theta$. 
\end{proof}

\subsection{Description of the real Weyl group}\label{secFINAL}
We can now state the main result; for convenience, we recall the assumptions. Let $\g^c$ be a semisimple complex Lie algebra with real form $\g$. Let $\theta$ be a Cartan involution of $\g$ and fix a $\theta$-stable Cartan subalgebra $\h$ of $\g$.  Let $G^c$ be the adjoint group of $\g^c$, let $G$ be the group consisting of all $g\in G^c$ with $g(\g)=\g$, and set  $W(\g,\h)=N_{G^\circ}(\h)/Z_{G^\circ}(\h)$.

\begin{theorem}\label{thmFINAL}We have $W(\g,\h)= (W^{\mathrm{c}})^\theta \ltimes (W^{\mathrm{re}}\times (A\ltimes W^{\imm,\mathrm{c}}))$ with $A=Q\cap W(\Phi^{\imm,\rho})\cap W^{\imm,2}$.
\end{theorem}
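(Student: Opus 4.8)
The plan is to glue together the results of Sections~\ref{secRSS} and~\ref{secRWG} by elementary semidirect-product bookkeeping; the substantive input is already contained in Propositions~\ref{prop:3},~\ref{propB} and~\ref{propWI}. The one auxiliary fact I would isolate first is an ``intermediate subgroup'' principle: \emph{if $G=C\ltimes N$ with $N\unlhd G$ and $N\le H\le G$, then $H=(H\cap C)\ltimes N$}. Indeed, for $h\in H$ write $h=cn$ with $c\in C$, $n\in N$; since $N\le H$ we get $c=hn^{-1}\in H$, hence $H=(H\cap C)N$, while $(H\cap C)\cap N\subseteq C\cap N=\{1\}$ and $N\unlhd H$. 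Applying this twice yields the iterated form: \emph{if $G=C\ltimes(R\times N)$ and $C\le H\le G$ with $R\le H$, then $H=C\ltimes\bigl(R\times(H\cap N)\bigr)$}.

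First I would use the iterated form with $G=W(\Phi)^\theta=(W^{\mathrm c})^\theta\ltimes(W^{\mathrm{re}}\times W^{\imm})$ from Proposition~\ref{prop:3} and $H=W(\g,\h)$: the inclusion $W(\g,\h)\le W(\Phi)^\theta$ is the observation at the start of Section~\ref{secWinWT}, and the inclusions $(W^{\mathrm c})^\theta\le W(\g,\h)$ and $W^{\mathrm{re}}\le W(\g,\h)$ were established earlier in Section~\ref{secRWG}. This gives
\[
W(\g,\h)=(W^{\mathrm c})^\theta\ltimes\bigl(W^{\mathrm{re}}\times(W(\g,\h)\cap W^{\imm})\bigr).
\]
Next I would pin down the factor $W(\g,\h)\cap W^{\imm}$. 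By the end of Section~\ref{secRWG} we have $W^{\imm,\mathrm c}\le W(\g,\h)\cap W^{\imm}\le W^{\imm,2}$, and Proposition~\ref{propB}(b) gives $W^{\imm,2}=Q\ltimes W^{\imm,\mathrm c}$ with $W^{\imm,\mathrm c}\unlhd W^{\imm,2}$. Applying the simple form of the principle with $G=W^{\imm,2}$, $C=Q$, $N=W^{\imm,\mathrm c}$ and $H=W(\g,\h)\cap W^{\imm}$ gives
\[
W(\g,\h)\cap W^{\imm}=A\ltimes W^{\imm,\mathrm c},\qquad A:=\bigl(W(\g,\h)\cap W^{\imm}\bigr)\cap Q=Q\cap W(\g,\h),
\]
where the last equality uses $Q\subseteq W^{\imm,2}\subseteq W^{\imm}$; this is the subgroup $A$ of the statement (recall $Q=W^{\imm,2}\cap W(\Phi^{\imm,\rho})$ by Proposition~\ref{propB}(c)). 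Substituting this expression for $W(\g,\h)\cap W^{\imm}$ into the previous display yields the asserted decomposition $W(\g,\h)=(W^{\mathrm c})^\theta\ltimes\bigl(W^{\mathrm{re}}\times(A\ltimes W^{\imm,\mathrm c})\bigr)$.

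It remains to check that the three products are legitimate. For $A\ltimes W^{\imm,\mathrm c}$: the subgroup $W^{\imm,\mathrm c}$ is normal in $W^{\imm,2}$ (Proposition~\ref{propB}(b)) and hence in the intermediate group $W(\g,\h)\cap W^{\imm}$, and $A\cap W^{\imm,\mathrm c}\subseteq Q\cap W^{\imm,\mathrm c}=\{1\}$ by Proposition~\ref{propB}(b). The group $W^{\mathrm{re}}\times(A\ltimes W^{\imm,\mathrm c})$ equals $W(\g,\h)\cap(W^{\mathrm{re}}\times W^{\imm})$, hence is normal in $W(\g,\h)$ because $W^{\mathrm{re}}\times W^{\imm}\unlhd W(\Phi)^\theta$ (Proposition~\ref{prop:3}), and the product is \emph{direct} because $(\Phi^{\mathrm{re}},\Phi^{\imm})=0$ (Lemma~\ref{lemForm}) makes $W^{\mathrm{re}}$ and $W^{\imm}$ commute elementwise. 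The outermost semidirect product, together with $(W^{\mathrm c})^\theta\cap(W^{\mathrm{re}}\times W^{\imm})=\{1\}$, is Proposition~\ref{prop:3}. Concretely $A$ is then obtained from Proposition~\ref{propWI} by intersecting the explicit set $\{\,s_{\alpha_1}^{\epsilon_1}\cdots s_{\alpha_m}^{\epsilon_m}\mid(\epsilon_1,\ldots,\epsilon_m)\in E\,\}$, which describes $W(\Phi^{\imm,\rho})\cap W(\g,\h)$, with $Q$. I do not foresee a genuine obstacle: the only care needed is to invoke the intermediate-subgroup principle with the correct normal factor at each of the two nested levels and to cite, in the right places, the inclusions $W(\g,\h)\le W(\Phi)^\theta$ and $(W^{\mathrm c})^\theta,W^{\mathrm{re}},W^{\imm,\mathrm c}\le W(\g,\h)$ from the earlier sections; all the real content lies in Propositions~\ref{prop:3},~\ref{propB} and~\ref{propWI}.
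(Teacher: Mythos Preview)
Your proposal is correct and follows exactly the line the paper intends: the theorem is stated as a summary of Section~\ref{secRWG} without a separate proof, and you have made explicit precisely the semidirect-product bookkeeping that glues together Proposition~\ref{prop:3}, the inclusions $(W^{\mathrm c})^\theta,W^{\mathrm{re}},W^{\imm,\mathrm c}\le W(\g,\h)\le W(\Phi)^\theta$, Proposition~\ref{propB}, and Proposition~\ref{propWI}. Your ``intermediate subgroup'' principle is the right tool for this and your verification of normality and trivial intersections at each level is exactly what is needed.
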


The groups $W^{\mathrm{re}}$, $W^{\imm,\mathrm{c}}$, and $W^{\mathrm{c}}$ are described in Section \ref{secRSS}; the group $(W^{\mathrm{c}})^\theta$ is the set of fixed points under the action of $\theta$. The group $Q$ is described in \eqref{eqQ} and the group $W(\Phi^{\imm,\rho})\cap W^{\imm,2}$ is described in Proposition \ref{propWI}. Together, this yields a construction of $W(\g,\h)$ based on the root system of $\g^c$.

\section{Computations}\label{secComp} 
An implementation of our construction algorithm for $W(\g,\h)$ is distributed with the software package CoReLG for the computer algebra system GAP \cite{gap}. Based on the description provided here, the implementation of all steps of the algorithm is straightforward
so we will not comment on this. As an example, we have constructed the real Weyl groups for all simple real forms of rank at most 8. Table \ref{tab1} lists some results where for each Lie type the computation took the most time (in seconds), or where $A$ was maximal (size 16). We have split the timing into the time for computing the root system and Chevalley basis (column labeled time RS) and the remaining time for computing the real Weyl group (column labeled time WG). The column labeled id lists the id of the real form as given by the CoReLG function {\sf IdRealForm} and the column labeled csa lists the position of the respective Cartan subalgebra in the output of the CoReLG function {\sf CartanSubalgebrasOfRealForm}.

{\small
\begin{table}[h]
\begin{tabular}{ccc||rrrr|rr}
$\g$ & id & csa & $|(W^{\mathrm{c}})^\theta|$ & $|W^\re|$ & $|A|$ & $|W^{\imm,\mathrm{c}}|$ & time RS & time WG\\\hline
$E_8(8)$ &  [E,8,2] &8 & 2 & 4 & 4 & 576 &  4.2 & 3.7\\
$E_7(7)$  & [E,7,2] &8 & 2 & 4 & 4 & 16 &1.8 & 1.3\\ 
$E_6(6)$  & [E,6,2] &4& 6 & 2 & 8 & 1 & 0.5& 0.3\\
$\mathfrak{sl}(9,\R)$  &[A,8,6] &5& 24 & 1 & 16 & 1 & 0.5&0.3\\
$\mathfrak{so}(8,9)$  &[B,8,5] &9 & 2 & 192 & 8 & 1 & 1.0 & 0.7\\
$\mathfrak{sp}(8,\R)$ & [C,8,6] &21 & 2 & 4 & 4 & 24 & 1.0&0.8\\
$\mathfrak{so}(3,13)$ &[D,8,5] & 14& 2 & 4 & 4 & 16 & 1.0 & 0.5\\
$F_4(4)$  &[F,4,2] & 3& 1 & 2 & 2 & 6 & 0.2&0.1\\
$G_2(2)$& [G,2,2] & 3& 1 & 2 & 2 & 1 & 0&0\\\cdashline{1-9}
$\mathfrak{so}(8,9)$  & [B,8,5]& 12& 6 & 16 & 16 & 1 & 1.0 & 0.6\\
$\mathfrak{sp}(8,\R)$  &[C,8,6]& 15& 24 & 16 & 16 & 1 & 1.0&0.9\\
$E_8(8)$  &[E,8,2]& 5& 24 & 16 & 16 & 1 & 5.1 & 3.5\\[1ex] 
\end{tabular}\caption{Examples of some real Weyl group orders}\label{tab1}
\end{table}

\end{document}